\def\RSthmtxt{theorem~}\newref{thm}{name = \RSthmtxt}}
\def\RSlemtxt{lemma~}\newref{lem}{name = \RSlemtxt}}
\numberwithin{equation}{section}
\numberwithin{figure}{section}
\numberwithin{table}{section}
\theoremstyle{plain}
\newtheorem{thm}{\protect\theoremname}[section]
\theoremstyle{plain}
\newtheorem{lem}[thm]{\protect\lemmaname}
\theoremstyle{definition}
\newtheorem{defn}[thm]{\protect\definitionname}
\theoremstyle{remark}
\newtheorem{rem}[thm]{\protect\remarkname}
\theoremstyle{plain}
\newtheorem{cor}[thm]{\protect\corollaryname}
\theoremstyle{definition}
\newtheorem{example}[thm]{\protect\examplename}
\theoremstyle{remark}
\newtheorem*{acknowledgement*}{\protect\acknowledgementname}
\setlist[enumerate]{itemsep=5pt,topsep=3pt}
\setlist[enumerate,1]{label=\textup{(}\roman*\textup{)},ref=\roman*}
\setlist[enumerate,2]{label=(\alph*),ref=\theenumi \alph*}
\providecommand{\acknowledgementname}{Acknowledgement}
\providecommand{\corollaryname}{Corollary}
\providecommand{\definitionname}{Definition}
\providecommand{\examplename}{Example}
\providecommand{\lemmaname}{Lemma}
\providecommand{\remarkname}{Remark}
\providecommand{\theoremname}{Theorem}
\begin{document}
\title[]{New boundaries for positive definite functions}
\author{Palle Jorgensen}
\address{(Palle E.T. Jorgensen) Department of Mathematics, The University of
Iowa, Iowa City, IA 52242-1419, U.S.A. }
\email{palle-jorgensen@uiowa.edu}
\urladdr{http://www.math.uiowa.edu/\textasciitilde jorgen/}
\author{Feng Tian}
\address{(James F. Tian) Mathematical Reviews, 416 4th Street Ann Arbor, MI
48103-4816, U.S.A.}
\email{jft@ams.org}
\begin{abstract}
With view to applications in stochastic analysis and geometry, we
introduce a new correspondence for positive definite kernels (p.d.)
$K$ and their associated reproducing kernel Hilbert spaces. With
this we establish two kinds of factorizations: (i) Probabilistic:
Starting with a positive definite kernel $K$ we analyze associated
Gaussian processes $V$. Properties of the Gaussian processes will
be derived from certain factorizations of $K$, arising as a covariance
kernel of $V$. (ii) Geometric analysis: We discuss families of measure
spaces arising as boundaries for $K$. Our results entail an analysis
of a partial order on families of p.d. kernels, a duality for operators
and frames, optimization, Karhunen--Lo\`eve expansions, and factorizations.
Applications include a new boundary analysis for the Drury-Arveson
kernel, and for certain fractals arising as iterated function systems;
and an identification of optimal feature spaces in machine learning
models.
\end{abstract}

\subjclass[2000]{Primary 47L60, 46N30, 46N50, 42C15, 65R10, 05C50, 05C75, 31C20, 60J20;
Secondary 46N20, 22E70, 31A15, 58J65, 81S25, 68T05.}
\keywords{Reproducing kernel Hilbert space, frames, generalized Ito-integration,
the measurable category, analysis/synthesis, interpolation, Gaussian
free fields, optimization, transform, covariance, manifold learning,
feature space.}

\maketitle
\tableofcontents{}

\section{\label{sec:Intro}Introduction}

The notion of a \emph{positive definite} (p.d.) \emph{kernel} has
come to serve as a versatile tool in a host of problems in pure and
applied mathematics. The abstract notion of a p.d. kernel is in fact
a generalization of that of a positive definite function, or a positive-definite
matrix. Indeed, the matrix-point of view lends itself naturally to
the particular factorization question which we shall address below.
The general idea of p.d. kernels arose first in various special cases
in the first half of 20th century: It occurs in work by J. Mercer
in the context of solving integral operator equations; in the work
of G. Szeg\H{o} and S. Bergmann in the study of harmonic analysis
and the theory of complex domains; and in the work by N. Aronszajn
in boundary value problems for PDEs. It was Aronszajn who introduced
the natural notion of reproducing kernel Hilbert space (RKHS) which
will play a central role here. References covering the areas mentioned
above include \cite{MR3687240,MR0051437,MR562914,IM65,jorgensen2018harmonic,MR0277027,MR3882025},
and \cite{MR3721329}.

Let $X$ be a set and $K$ a function $X\times X\rightarrow\mathbb{C}$.
We say that $K$ is \emph{positive definite} (p.d.) iff (Def), for
all finite subsets $F_{N}=\left\{ x_{i}\right\} _{i=1}^{N}$, the
$N\times N$ matrix $\left(K\left(x_{i},x_{j}\right)\right)$ is positive
semidefinite. From the classical theory of Aronszajn et. al. (see
the papers cited above), there is then a canonical \emph{reproducing
kernel Hilbert space} (RKHS), denoted $\mathscr{H}\left(K\right)$,
with the following properties:
\begin{enumerate}
\item \label{enu:rk1}$K\left(\cdot,x\right)\in\mathscr{H}\left(K\right)$
for all $x\in X$;
\item \label{enu:rk2}$\mathscr{H}\left(K\right)$ is a Hilbert space of
functions on $X$, and , for all $x\in X$, we have 
\begin{equation}
f\left(x\right)=\left\langle K\left(\cdot,x\right),f\right\rangle _{\mathscr{H}\left(K\right)},\quad f\in\mathscr{H}\left(K\right).\label{eq:A1}
\end{equation}
Note that in (\ref{eq:A1}), the inner product $\left\langle \cdot,\cdot\right\rangle _{\mathscr{H}\left(K\right)}$
is linear in the second variable, i.e., in $f$.
\end{enumerate}
We shall need the following
\begin{lem}
\label{lem:A1}A function $f$ on $X$ is in $\mathscr{H}\left(K\right)$
if and only if there is a finite constant $C=C_{f}$ (depends only
on $f$) such that, for all $F_{N}=\left\{ x_{i}\right\} _{i=1}^{N}\subset X$,
finite subset, we have 
\begin{equation}
\left|\sum_{i=1}^{N}c_{i}f\left(x_{i}\right)\right|^{2}\leq C_{f}\sum_{i=1}^{N}\sum_{j=1}^{N}\overline{c}_{i}c_{j}K\left(x_{i},x_{j}\right).\label{eq:A2}
\end{equation}
Moreover, $\left\Vert f\right\Vert _{\mathscr{H}\left(K\right)}^{2}$
is then the infimum of the constant $C_{f}$ which occurs in (\ref{eq:A2}).
\end{lem}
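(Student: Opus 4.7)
The plan is a standard two-direction argument, with the nontrivial direction constructed from a Riesz-type representation on the ``pre-RKHS'' $\mathscr{H}_{0}\subset\mathscr{H}\left(K\right)$ spanned by the kernel sections $\left\{ K\left(\cdot,x\right):x\in X\right\} $. The crucial algebraic identity I would use repeatedly is
\[
\Bigl\Vert\sum_{i}c_{i}K\left(\cdot,x_{i}\right)\Bigr\Vert_{\mathscr{H}\left(K\right)}^{2}=\sum_{i,j}\overline{c}_{i}c_{j}K\left(x_{i},x_{j}\right),
\]
which is immediate from the reproducing property (\ref{eq:A1}) together with sesquilinearity in the convention that $\left\langle \cdot,\cdot\right\rangle $ is linear in the second slot. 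This identity is what makes the right-hand side of (\ref{eq:A2}) a squared norm, and it is the bridge between the ``finitary'' estimate (\ref{eq:A2}) and the Hilbert space structure.

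For the \emph{necessity} direction, I would assume $f\in\mathscr{H}\left(K\right)$, set $g=\sum c_{i}K\left(\cdot,x_{i}\right)\in\mathscr{H}_{0}$, and apply the reproducing property to write $\sum c_{i}f\left(x_{i}\right)$ as a single inner product of $f$ against a known element (up to conjugation of the $c_{i}$'s; both sides of (\ref{eq:A2}) are invariant under $c_{i}\mapsto\overline{c}_{i}$ because the right-hand side is real). Cauchy--Schwarz then yields (\ref{eq:A2}) with $C_{f}=\left\Vert f\right\Vert _{\mathscr{H}\left(K\right)}^{2}$, so in particular $\inf C_{f}\leq\left\Vert f\right\Vert _{\mathscr{H}\left(K\right)}^{2}$.

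For the \emph{sufficiency} direction, I would introduce the linear functional $L_{f}:\mathscr{H}_{0}\rightarrow\mathbb{C}$ determined by $L_{f}\bigl(K\left(\cdot,x\right)\bigr)=f\left(x\right)$, i.e.\ $L_{f}\bigl(\sum c_{i}K\left(\cdot,x_{i}\right)\bigr)=\sum c_{i}f\left(x_{i}\right)$. The delicate point — and the main obstacle — is \emph{well-definedness} of $L_{f}$: I must rule out that the same element of $\mathscr{H}_{0}$ (as a function on $X$, or as an equivalence class modulo null vectors) admits two expansions with different values of $\sum c_{i}f\left(x_{i}\right)$. This reduces, by subtraction, to showing that if $\sum c_{i}K\left(\cdot,x_{i}\right)=0$ then $\sum c_{i}f\left(x_{i}\right)=0$; but vanishing as a function forces the norm $\sum\overline{c}_{i}c_{j}K\left(x_{i},x_{j}\right)$ to vanish, and then (\ref{eq:A2}) kills $\sum c_{i}f\left(x_{i}\right)$. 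This is precisely where the hypothesis (\ref{eq:A2}) does its essential work.

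Once $L_{f}$ is well-defined, the identity above together with (\ref{eq:A2}) gives $\left|L_{f}\left(g\right)\right|^{2}\leq C_{f}\left\Vert g\right\Vert _{\mathscr{H}\left(K\right)}^{2}$ on $\mathscr{H}_{0}$, so $L_{f}$ extends by continuity to a bounded linear functional on $\mathscr{H}\left(K\right)$ with $\left\Vert L_{f}\right\Vert ^{2}\leq C_{f}$. Riesz's theorem produces $h\in\mathscr{H}\left(K\right)$ with $L_{f}\left(g\right)=\left\langle h,g\right\rangle _{\mathscr{H}\left(K\right)}$ and $\left\Vert h\right\Vert ^{2}=\left\Vert L_{f}\right\Vert ^{2}\leq C_{f}$. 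Evaluating at $g=K\left(\cdot,x\right)$ and invoking (\ref{eq:A1}) gives $h\left(x\right)=L_{f}\bigl(K\left(\cdot,x\right)\bigr)=f\left(x\right)$, so $f=h\in\mathscr{H}\left(K\right)$ with $\left\Vert f\right\Vert _{\mathscr{H}\left(K\right)}^{2}\leq C_{f}$. Taking the infimum over admissible $C_{f}$ and combining with the reverse inequality from the necessity half yields $\left\Vert f\right\Vert _{\mathscr{H}\left(K\right)}^{2}=\inf C_{f}$, completing the proof.
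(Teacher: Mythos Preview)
Your argument is correct and is in fact the standard proof of this characterization; the paper itself does not supply a proof but simply refers to the literature (Aronszajn et al.), so there is nothing to compare against beyond noting that your route is the textbook one.
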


\begin{proof}
We refer to the literature; see especially \cite{MR1986785,MR0008639,MR3860446}.
\end{proof}
We shall study sigma-finite measure spaces $\left(B,\mathscr{F}_{B},\mu\right)$
and functions 
\begin{equation}
X\ni x\longmapsto k_{x}^{\left(B\right)}\in L^{2}\left(B,\mu\right)\label{eq:A3}
\end{equation}
such that the associated p.d. kernel $K^{\left(\mu\right)}$ on $X\times X$
satisfying:
\[
K^{\left(\mu\right)}\left(x,y\right)=\left\langle k_{x}^{\left(B\right)},k_{y}^{\left(B\right)}\right\rangle _{L^{2}\left(B,\mu\right)}
\]
is comparable with $K$ itself (in a sense which we make precise in
\defref{FS} below).

Right up to the present, p.d. kernels have arisen as powerful tools
in many and diverse areas of mathematics. A partial list includes
the areas listed above in the Introduction. An important new area
of application of RKHS theory includes the following \cite{MR1120274,MR1200633,MR1473250,MR1821907,MR1873434,MR1986785,MR2223568,MR2373103}.

\section{Partial Order on Kernels, Operators, and Factorizations}

In this section we discuss two kinds of factorizations: (i) Probabilistic:
Starting with a positive definite kernel $K$ we present a canonical
Gaussian process $V$ which yields a factorization of $K$. In particular,
$K$ arises as covariance kernel for $V$; see (\ref{eq:g4}). And
(ii) Geometric: we discuss families of measure spaces which arise
as boundaries for $K$; see Definitions \ref{def:FS} and \ref{def:D5}.

\subsection{Kernels in Probability and in Geometry}

We want to stress below the distinction between the notions of factorization
introduced in eq. (\ref{eq:k4}) and (\ref{eq:k5}) from \defref{FS}.

In summary, condition (\ref{eq:k4}) has its roots in probability
and in the study of Gaussian processes. For the following result we
cite the papers \cite{AD92,MR2966130,MR3402823,MR3687240,MR2373103,MR0088706,MR3379106,MR0301806,MR562914,MR1176778,IM65,MR2053326,MR3701541,doi:10.1002/9781119414421.ch2,MR3574374,Jorgensen2017,zbMATH06690858,MR3888850,MR4020693,MR1472736,MR3526117,PaSc75,MR1478165,MR3571702,MR3688637},
and especially \cite{MR735967}.
\begin{thm}
Let $X\times X\xrightarrow{\;K\;}\mathbb{C}$ be a fixed positive
definite (p.d.) kernel. Then there is a Gaussian process 
\begin{equation}
\left(V_{x}\right)_{x\in X}\subset L^{2}\left(\Omega,\mathscr{F},\mathbb{P}\right)\label{eq:g1}
\end{equation}
realized on a probability space $\left(\Omega,\mathscr{F},\mathbb{P}\right)$
as follows:

For every $F_{n}:=\left\{ x_{i}\right\} _{i=1}^{n}$ finite subset
of $X$, let $\mathscr{F}_{n}$ be the corresponding cylinder sigma-algebra
of subsets of $\mathbb{C}^{X}$ (= the corresponding infinite Cartesian
product) with measures specified as follows: If $A_{n}\in\mathscr{F}_{n}$,
set 
\begin{equation}
\mathbb{P}_{n}\left(A_{n}\right)=\int_{\mathbb{C}^{n}}\chi_{A_{n}}\left(z\right)dg_{n}\left(z\right)\label{eq:g2}
\end{equation}
where $g_{n}\left(\cdot\right)$ is the Gaussian density on $\mathbb{C}^{n}$
with mean zero, and $n\times n$ covariance matrix $\left(K\left(x_{i},x_{j}\right)\right)_{\left(i,j\right)}$.
A standard application of Kolmogorov's consistency theorem then yields
a measure $\mathbb{P}$ and $\Omega=\mathbb{C}^{X}$, with $\mathscr{F}=$
the sigma algebra of subsets generalized by the cylinder subsets such
that 
\begin{equation}
\mathbb{E}_{\mathbb{P}}\left(\psi\chi_{A_{n}}\right)=\mathbb{E}_{n}\left(\mathbb{E}\left(\psi\mid\mathscr{F}_{n}\right)\chi_{A_{n}}\right)\label{eq:g3}
\end{equation}
holds for all $\psi\in L^{2}\left(\Omega,\mathscr{F},\mathbb{P}\right)$,
all $n$, and all $A_{n}\in\mathscr{F}_{n}$. Then for $\omega\in\Omega\left(=\mathbb{C}^{X}\right)$
and $x\in X$, set $V_{x}\left(\omega\right)=\omega\left(x\right)$.
It follows that $\left\{ V_{x}\right\} _{x\in X}$ is the desired
Gaussian process; in particular, 
\begin{equation}
\mathbb{E}\left(\overline{V}_{x}V_{y}\right)=K\left(x,y\right)\label{eq:g4}
\end{equation}
for all $\left(x,y\right)\in X\times X$.

Moreover, the assignment 
\begin{equation}
\underset{\left(\text{the RKHS}\right)}{\mathscr{H}\left(K\right)}\ni K\left(\cdot,x\right)\xrightarrow{\;T_{K}\;}V_{x}\in L^{2}\left(\Omega,\mathscr{F},\mathbb{P}\right)\label{eq:g5}
\end{equation}
defines a (canonical) isometry (extension by linearity and norm-closure).
\end{thm}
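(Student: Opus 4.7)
The plan is to build the process by the standard Kolmogorov recipe, then read off the covariance identity (\ref{eq:g4}) from the finite-dimensional marginals, and finally promote the pointwise assignment $K(\cdot,x)\mapsto V_x$ to a Hilbert-space isometry by a direct norm computation on the dense span of kernel sections.

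First, for each finite subset $F_n=\{x_i\}_{i=1}^n\subset X$, the matrix $M_n:=(K(x_i,x_j))_{i,j=1}^n$ is Hermitian and positive semidefinite by the defining property of $K$, so there is a centered (circularly symmetric) complex Gaussian measure $\mathbb{P}_n$ on $\mathbb{C}^n$ with covariance $M_n$, defined by (\ref{eq:g2}) via a density $g_n$ in the nondegenerate case and as a weak limit (or directly via its Fourier transform $\exp(-\tfrac14 c^*M_n c)$) in general. The system $\{\mathbb{P}_n\}$ is consistent in the Kolmogorov sense: if $F_m\subset F_n$, marginalizing $\mathbb{P}_n$ onto the coordinates of $F_m$ yields a centered Gaussian with covariance the corresponding principal submatrix of $M_n$, which is precisely $\mathbb{P}_m$. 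This is the only place where a small verification is needed, and it follows either from the characteristic-function formula or by block diagonalization.

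Kolmogorov's extension theorem then produces a probability measure $\mathbb{P}$ on $\Omega=\mathbb{C}^X$, equipped with the cylinder $\sigma$-algebra $\mathscr{F}$, whose cylinder marginals are exactly the $\mathbb{P}_n$; the identity (\ref{eq:g3}) is just the defining property of the conditional expectation onto the sub-$\sigma$-algebra $\mathscr{F}_n$ generated by the $F_n$-coordinates. Defining $V_x(\omega):=\omega(x)$ realizes every finite family $(V_{x_1},\dots,V_{x_n})$ as a centered complex Gaussian with covariance $M_n$; specializing to $n=2$ yields the covariance identity
\[
\mathbb{E}(\overline{V}_x V_y)=K(x,y),\qquad (x,y)\in X\times X,
\]
which is exactly (\ref{eq:g4}).

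For the isometry $T_K$ in (\ref{eq:g5}), I would work first on the dense subspace $\mathscr{D}:=\operatorname{span}\{K(\cdot,x):x\in X\}\subset\mathscr{H}(K)$. For any finite sum $f=\sum_i c_i K(\cdot,x_i)$, the reproducing property gives
\[
\|f\|_{\mathscr{H}(K)}^{2}=\sum_{i,j}\overline{c}_i c_j\,\bigl\langle K(\cdot,x_i),K(\cdot,x_j)\bigr\rangle_{\mathscr{H}(K)}=\sum_{i,j}\overline{c}_i c_j\,K(x_i,x_j),
\]
while the covariance identity just proved gives
\[
\Bigl\|\sum_i c_i V_{x_i}\Bigr\|_{L^{2}(\mathbb{P})}^{2}=\sum_{i,j}\overline{c}_i c_j\,\mathbb{E}(\overline{V}_{x_i}V_{x_j})=\sum_{i,j}\overline{c}_i c_j\,K(x_i,x_j).
\]
The two expressions agree, so $T_K\colon f\mapsto \sum_i c_i V_{x_i}$ is well-defined (independent of the representation of $f$) and norm-preserving on $\mathscr{D}$; extension by linearity and closure yields the canonical isometry $\mathscr{H}(K)\to L^{2}(\Omega,\mathscr{F},\mathbb{P})$ asserted in (\ref{eq:g5}).

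The only genuinely technical point is the complex-Gaussian/consistency step in the first paragraph; the reproducing-kernel identity and the isometry computation are essentially formal once (\ref{eq:g4}) is in hand. Degeneracy of $M_n$ (which occurs exactly when the kernel sections $K(\cdot,x_i)$ are linearly dependent in $\mathscr{H}(K)$) poses no obstacle: it is handled by defining $\mathbb{P}_n$ through its Fourier transform rather than a density, and it is in fact consistent with the well-definedness of $T_K$ above.
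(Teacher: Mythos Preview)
Your proposal is correct and follows the same approach as the paper. In fact, the paper does not give a separate proof at all: the construction via Kolmogorov consistency is embedded in the theorem statement itself, with the details deferred to the cited literature (``For the following result we cite the papers \ldots''). Your write-up simply fills in those standard details---the consistency check for the finite-dimensional Gaussians, the covariance identity from the $n=2$ marginal, and the norm computation on $\operatorname{span}\{K(\cdot,x)\}$ establishing the isometry---and your remark on handling degenerate $M_n$ via characteristic functions is a sensible addition that the paper does not make explicit.
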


We say that (\ref{eq:g4}) is the \emph{universal factorization} for
the global p.d. kernel $K$. Hence if $\left(B,\mathscr{F}_{B},\mu,k^{\left(B\right)}\right)$
is a solution to (\ref{eq:k5}), then we get a contractive canonical
embedding
\[
L^{2}\left(B,\mu\right)\xrightarrow{\;W_{\left(B,\mu\right)}\;}L^{2}\left(\Omega,\mathscr{F},\mathbb{P}\right)
\]
by composition; see \thmref{A7} and \corref{T1}: 
\[
\xymatrix{L^{2}\left(B,\mu\right)\ar[rr]^{W_{\left(B,\mu\right)}=T_{K}T_{B}^{*}}\ar[dr]_{\underset{\text{see (\ref{eq:A11})}}{T_{B}^{*}}} &  & L^{2}\left(\Omega,\mathbb{P}\right)\\
 & \mathscr{H}\left(K\right)\ar[ru]_{\underset{\text{(see (\ref{eq:g5}))}}{T_{K}}}
}
\]

\begin{defn}
\label{def:k1}Let $K,L:X\times X\rightarrow\mathbb{C}$ be positive
definite (p.d.) kernels. An order relation $K\ll L$ is specified
as follows: For all finite subset $F=\left\{ x_{i}\right\} _{1}^{N}\subset X$,
and for all $\left(c_{i}\right)_{1}^{N}$, $c_{i}\in\mathbb{C}$,
\begin{equation}
\sum_{i}\sum_{j}\overline{c}_{i}c_{j}K\left(x_{i},x_{j}\right)\leq\sum_{i}\sum_{j}\overline{c}_{i}c_{j}L\left(x_{i},x_{j}\right).\label{eq:k1}
\end{equation}
\end{defn}

Given $K,L$ as above, assuming $K\ll L$, let $\mathscr{H}\left(K\right)$
and $\mathscr{H}\left(L\right)$ be the respective RKHS. 
\begin{defn}
Let $T:\mathscr{H}\left(L\right)\rightarrow\mathscr{H}\left(K\right)$
be the contraction by extension for $x\in X$: 
\begin{equation}
T:L\left(\cdot,x\right)\longmapsto K\left(\cdot,x\right)\label{eq:A5}
\end{equation}
where $L\left(\cdot,x\right)$ and $K\left(\cdot,x\right)$ are both
functions on $X$. 

Specifically, $T=T_{L\rightarrow K}$ extends by linearity and norm
closure
\begin{equation}
\left\Vert \sum\nolimits _{i}c_{i}K\left(\cdot,x_{i}\right)\right\Vert _{\mathscr{H}\left(K\right)}^{2}\leq\left\Vert \sum\nolimits _{i}c_{i}L\left(\cdot,x_{i}\right)\right\Vert _{\mathscr{H}\left(L\right)}^{2}.
\end{equation}
Hence $T:\mathscr{H}\left(L\right)\rightarrow\mathscr{H}\left(K\right)$
is norm contractive, i.e., with respect to the respective norms in
the two RKHSs.
\end{defn}

Given a positive definite (p.d.) kernel $K$, we shall consider families
of factorizations corresponding to certain admissible measures $\mu$.
Specifically, given $\mu$, we shall construct associated p.d. kernels
$K^{\left(\mu\right)}$. We shall then say that $\mu$ is admissible
iff (Def) $K^{\left(\mu\right)}\ll K$; where $\ll$ is the order
relation (on p.d. kernels) defined in \defref{k1}. The kernel $K^{\left(\mu\right)}$
is defined in (\ref{eq:k5}) below.
\begin{defn}
\label{def:FS}Let $K:X\times X\rightarrow\mathbb{C}$ be a p.d.
kernel. Set 
\begin{equation}
\mathscr{F}\left(K\right):=\left\{ B,\mathscr{F}_{B},\mu,k^{\left(B\right)}\mathrel{;}K\left(x,y\right)=\int_{B}\overline{k_{x}^{\left(B\right)}\left(b\right)}k_{y}^{\left(B\right)}\left(b\right)d\mu\left(b\right)\right\} ;\label{eq:k4}
\end{equation}
and by contrast, we set 
\begin{align}
\mathscr{F}S\left(K\right) & :=\Big\{ B,\mathscr{F}_{B},\mu,k^{\left(B\right)}\mathrel{;}K^{\left(\mu\right)}\left(x,y\right)=\int_{B}\overline{k_{x}^{\left(B\right)}\left(b\right)}k_{y}^{\left(B\right)}\left(b\right)d\mu\left(b\right),\label{eq:k5}\\
 & \qquad\text{and \ensuremath{K^{\left(\mu\right)}\ll K}}\Big\},\nonumber 
\end{align}
where $\left(B,\mathscr{F}_{B},\mu\right)$ is a measure space, and
$\{k_{x}^{\left(B\right)}\}_{x\in X}$ a system of functions in $L^{2}\left(\mu\right)$. 
\end{defn}

\begin{lem}
\label{lem:TB}Let $K:X\times X\rightarrow\mathbb{C}$ be a p.d. kernel
as above, and assume $\left(B,\mathscr{F}_{B},\mu,k^{\left(B\right)}\right)\in\mathscr{F}S\left(K\right)$,
then 
\begin{equation}
T_{B}:\mathscr{H}\left(K\right)\ni\sum_{i}c_{i}K\left(\cdot,x_{i}\right)\longmapsto\sum_{i}c_{i}k_{x_{i}}^{\left(B\right)}\left(\cdot\right)\in L^{2}\left(B,\mu\right)\label{eq:A9}
\end{equation}
is contractive.
\end{lem}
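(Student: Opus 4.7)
The plan is to verify the contractive inequality on the dense subspace of finite linear combinations $\sum_i c_i K(\cdot, x_i) \in \mathscr{H}(K)$, and then use that inequality simultaneously to establish well-definedness of $T_B$ and to extend it by continuity to all of $\mathscr{H}(K)$.

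First I would pick a finite subset $F_N = \{x_i\}_{i=1}^N \subset X$ and scalars $c_i \in \mathbb{C}$, and compute the two sides directly. Using the reproducing property for $\mathscr{H}(K)$,
\[
\left\Vert \sum\nolimits_i c_i K(\cdot,x_i)\right\Vert_{\mathscr{H}(K)}^2 \;=\; \sum_{i,j} \overline{c}_i c_j K(x_i,x_j).
\]
On the other side, using the integral representation in \eqref{eq:k5},
\[
\left\Vert \sum\nolimits_i c_i k_{x_i}^{(B)}\right\Vert_{L^2(B,\mu)}^2 \;=\; \int_B \Big|\sum\nolimits_i c_i k_{x_i}^{(B)}(b)\Big|^2 d\mu(b) \;=\; \sum_{i,j} \overline{c}_i c_j K^{(\mu)}(x_i,x_j).
\]

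Next I would invoke the assumption $(B,\mathscr{F}_B,\mu,k^{(B)}) \in \mathscr{F}S(K)$, which, by \defref{FS}, includes the requirement $K^{(\mu)} \ll K$. Applying the defining inequality \eqref{eq:k1} of the partial order to the same $F_N$ and coefficients $(c_i)$ yields
\[
\sum_{i,j} \overline{c}_i c_j K^{(\mu)}(x_i,x_j) \;\leq\; \sum_{i,j} \overline{c}_i c_j K(x_i,x_j),
\]
which is precisely $\|T_B f\|_{L^2(B,\mu)}^2 \leq \|f\|_{\mathscr{H}(K)}^2$ for every finite linear combination $f = \sum_i c_i K(\cdot,x_i)$.

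The step that requires a small amount of care—and is the one point where the inequality must do double duty—is well-definedness: a given element of the dense subspace of $\mathscr{H}(K)$ can be represented by many different finite sums, so one must check that the image in $L^2(B,\mu)$ does not depend on the representation. This is handled by the inequality itself, since if $\sum_i c_i K(\cdot,x_i) - \sum_j c'_j K(\cdot,y_j) = 0$ in $\mathscr{H}(K)$, then the right-hand side in the displayed inequality vanishes, forcing $\sum_i c_i k_{x_i}^{(B)} - \sum_j c'_j k_{y_j}^{(B)} = 0$ in $L^2(B,\mu)$. Once well-definedness is established, the contractive bound permits a standard extension by linearity and norm-closure from the dense subspace $\operatorname{span}\{K(\cdot,x) : x \in X\}$ to all of $\mathscr{H}(K)$, giving the contraction $T_B: \mathscr{H}(K) \to L^2(B,\mu)$. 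No deep obstacle appears; the content of the lemma is essentially a repackaging of $K^{(\mu)} \ll K$ into operator-theoretic language.
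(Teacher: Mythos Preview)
Your proof is correct and follows essentially the same approach as the paper: compute both norms on finite linear combinations, invoke $K^{(\mu)}\ll K$ to obtain the contractive inequality, and then extend by continuity. If anything, you are slightly more explicit than the paper in addressing well-definedness of $T_B$ on the dense span, but the argument is otherwise identical.
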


\begin{proof}
Let $F=\sum_{i}c_{i}K\left(\cdot,x_{i}\right)$, and set $T_{B}\left(F\right)=\widetilde{F}=\sum_{i}c_{i}k_{x_{i}}^{\left(B\right)}\left(\cdot\right)$
on $B$. Then 
\begin{align*}
\left\Vert T_{B}\left(F\right)\right\Vert _{L^{2}\left(\mu\right)}^{2} & =\int_{B}\left|\widetilde{F}\left(b\right)\right|^{2}d\mu\left(b\right)\\
 & =\int_{B}\left|\sum c_{i}k_{x_{i}}^{\left(B\right)}\left(b\right)\right|^{2}d\mu\left(b\right)\\
 & =\sum_{i}\sum_{j}\overline{c}_{i}c_{j}\int_{B}\overline{k_{x_{i}}^{\left(B\right)}\left(b\right)}k_{x_{j}}^{\left(B\right)}\left(b\right)d\mu\left(b\right)\\
 & =\sum_{i}\sum_{j}\overline{c}_{i}c_{j}K^{\left(\mu\right)}\left(x_{i},x_{j}\right)\\
 & \leq\sum_{i}\sum_{j}\overline{c}_{i}c_{j}K\left(x_{i},x_{j}\right)=\left\Vert F\right\Vert _{\mathscr{H}\left(K\right)}^{2},
\end{align*}
since $K^{\left(\mu\right)}\ll K$ which is assumed; see (\ref{eq:k5}).
The proof now finishes as usual with the use of norm-completion and
use of contractivity. 
\end{proof}
\begin{defn}
We say that $\left(B,\mathscr{F}_{B},\mu,k^{\left(B\right)}\right)$
is a \emph{boundary} for $K$ iff (Def.) (\ref{eq:k4}) holds. We
say that it is a \emph{sub-boundary} iff (Def.) $K^{\left(\mu\right)}\ll K$
where $K^{\left(\mu\right)}$ is defined as in (\ref{eq:k5}). See
also (\ref{eq:k1}).
\end{defn}

\begin{thm}
\label{thm:A7}Let $K:X\times X\rightarrow\mathbb{C}$ be given, assumed
positive definite. Let $\left(B,\mathscr{F}_{B},\mu\right)$ be a
fixed sigma-finite measure space, and set, for $\left(x,y\right)\in X\times X$,
\begin{equation}
K^{\left(\mu\right)}\left(x,y\right)=\int_{B}\overline{k_{x}^{\left(B\right)}\left(b\right)}k_{y}^{\left(B\right)}\left(b\right)d\mu\left(b\right),\label{eq:A10}
\end{equation}
where $\{k_{x}^{\left(B\right)}\}_{x\in X}$ is a subset in $L^{2}\left(B,\mu\right)$.
Then the following conditions are equivalent:
\begin{enumerate}
\item \label{enu:mu1}$K^{\left(\mu\right)}\ll K$
\item \label{enu:mu2}The operator $T_{B}$ in (\ref{eq:A9}) is well defined
and contractive.
\item \label{enu:mu3}For $\varphi\in L^{2}\left(B,\mu\right)$ and $x\in X$,
set 
\begin{equation}
\left(S_{B}\varphi\right)\left(x\right)=\int_{B}\overline{k_{x}^{\left(B\right)}\left(b\right)}\varphi\left(b\right)d\mu\left(b\right);\label{eq:A11}
\end{equation}
then $S_{B}$ is well defined and contractive $S_{B}:L^{2}\left(B,\mu\right)\rightarrow\mathscr{H}\left(K\right)$.
\end{enumerate}
Moreover, if the conditions are satisfied, then 
\begin{equation}
S_{B}=T_{B}^{*}\label{eq:A12}
\end{equation}
holds, where the adjoint in (\ref{eq:A12}) refers to the two Hilbert
spaces $\mathscr{H}\left(K\right)$ and $L^{2}\left(B,\mu\right)$
with the respective inner products.
\end{thm}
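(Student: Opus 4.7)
The plan is to run the cycle $(\ref{enu:mu1}) \Leftrightarrow (\ref{enu:mu2}) \Rightarrow (\ref{enu:mu3}) \Rightarrow (\ref{enu:mu2})$ and to derive $S_B = T_B^*$ as a byproduct of the reproducing property, testing on the canonical dense subspaces spanned by $\{K(\cdot,x)\}_{x\in X}$ and $\{k_x^{(B)}\}_{x\in X}$ respectively. The implication $(\ref{enu:mu1}) \Rightarrow (\ref{enu:mu2})$ is already \lemref{TB}. For the converse, I would run exactly the same chain of equalities as in the proof of \lemref{TB} but read backwards: for any finite $F = \sum_i c_i K(\cdot,x_i)$, the identity
\[
\|T_B F\|_{L^2(\mu)}^2 = \sum_{i,j} \overline{c}_i c_j K^{(\mu)}(x_i,x_j),\qquad \|F\|_{\mathscr{H}(K)}^2 = \sum_{i,j} \overline{c}_i c_j K(x_i,x_j),
\]
shows that contractivity of $T_B$ on the dense subspace of finite sums is literally the statement $K^{(\mu)} \ll K$ from \defref{k1}. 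Well-definedness of $T_B$ is included in contractivity, since if $F=0$ in $\mathscr{H}(K)$ then $\|T_BF\|\le\|F\|=0$.

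Next I would establish $(\ref{enu:mu1}) \Rightarrow (\ref{enu:mu3})$ by invoking \lemref{A1}. Given $\varphi \in L^2(B,\mu)$, I need a constant $C_\varphi$ such that
\[
\Big|\sum_{i=1}^{N} c_i (S_B\varphi)(x_i)\Big|^2 \leq C_\varphi \sum_{i,j} \overline{c}_i c_j K(x_i,x_j)
\]
for every finite sample. Pulling the $c_i$ inside and applying Cauchy--Schwarz in $L^2(B,\mu)$ gives
\[
\Big|\sum_i c_i (S_B\varphi)(x_i)\Big|^2 = \Big|\int_B \overline{\sum_i \overline{c}_i k_{x_i}^{(B)}(b)}\,\varphi(b)\,d\mu(b)\Big|^2 \leq \|\varphi\|_{L^2(\mu)}^2\,\sum_{i,j} \overline{c_i'} c_j' K^{(\mu)}(x_i,x_j),
\]
with $c_i' := \overline{c}_i$ (up to the standard index relabeling that uses the Hermitian symmetry of $K^{(\mu)}$). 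The hypothesis $K^{(\mu)}\ll K$ then yields the required bound with $C_\varphi = \|\varphi\|^2_{L^2(\mu)}$, and \lemref{A1} concludes both that $S_B\varphi \in \mathscr{H}(K)$ and that $\|S_B\varphi\|_{\mathscr{H}(K)} \leq \|\varphi\|_{L^2(\mu)}$.

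For the adjoint relation $S_B = T_B^*$ (which also gives $(\ref{enu:mu3}) \Rightarrow (\ref{enu:mu2})$), I would test on the canonical dense set. For $x\in X$ and $\varphi\in L^2(B,\mu)$, the reproducing property (\ref{eq:A1}) and the definitions give
\[
\bigl\langle K(\cdot,x),\,S_B\varphi\bigr\rangle_{\mathscr{H}(K)} = (S_B\varphi)(x) = \int_B \overline{k_x^{(B)}(b)}\,\varphi(b)\,d\mu(b) = \bigl\langle k_x^{(B)},\,\varphi\bigr\rangle_{L^2(\mu)} = \bigl\langle T_B K(\cdot,x),\,\varphi\bigr\rangle_{L^2(\mu)}.
\]
This identifies $S_B$ and $T_B^*$ on the dense span of $\{K(\cdot,x)\}$, and both are bounded, so they agree globally. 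The main obstacle, modest as it is, is bookkeeping with complex conjugates and the asymmetric Hermitian form in \defref{k1}: one must be careful that the order-relation $\ll$, which is phrased as an inequality of quadratic forms, actually applies to the particular linear combination $\sum \overline{c}_i k_{x_i}^{(B)}$ produced by Cauchy--Schwarz (rather than $\sum c_i k_{x_i}^{(B)}$). Hermitian symmetry of both $K$ and $K^{(\mu)}$ resolves this via the index substitution $i\leftrightarrow j$, $c_i\mapsto\overline{c}_i$, after which the three equivalences and the identification $S_B = T_B^*$ all fall into place.
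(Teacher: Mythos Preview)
Your proposal is correct and follows essentially the same route as the paper: the implication $(\ref{enu:mu1})\Rightarrow(\ref{enu:mu2})$ is \lemref{TB}, the passage to $(\ref{enu:mu3})$ is the same Cauchy--Schwarz estimate fed into \lemref{A1}, and the adjoint identity is verified by the same reproducing-kernel computation on the generators $K(\cdot,x)$. The only differences are organizational---you close the cycle via $(\ref{enu:mu3})\Rightarrow(\ref{enu:mu2})$ using $S_B^*$ rather than the paper's $(\ref{enu:mu3})\Rightarrow(\ref{enu:mu1})$, and you are more explicit about the conjugate bookkeeping (which the paper glosses over); just be sure that in the $(\ref{enu:mu3})\Rightarrow(\ref{enu:mu2})$ step you phrase it as ``$S_B^*$ agrees with $T_B$ on generators, hence $T_B$ is bounded,'' since at that stage $T_B^*$ is not yet known to exist.
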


\begin{proof}
(\ref{enu:mu1})$\Rightarrow$(\ref{enu:mu2}). This is \lemref{TB}.

(\ref{enu:mu2})$\Rightarrow$(\ref{enu:mu3}). For this we first
show that (\ref{enu:mu2}) yields a function in $\mathscr{H}\left(K\right)$
for all $\varphi\in L^{2}\left(B,\mu\right)$, i.e., the function
$X\ni x\longmapsto\left\langle k_{x}^{\left(B\right)},\varphi\right\rangle _{L^{2}\left(\mu\right)}$.
We refer to \lemref{A1} for this purpose. So let $\varphi\in L^{2}\left(B,\mu\right)$.
Fix $N$, and $\left\{ x_{i}\right\} _{i=1}^{N}\subset X$; then if
$c_{i}\in\mathbb{C}$, $i=1,\dots,N$, then 
\begin{eqnarray*}
\left|\sum_{i=1}^{N}c_{i}\left\langle k_{x_{i}}^{\left(B\right)},\varphi\right\rangle _{L^{2}\left(\mu\right)}\right|^{2} & = & \left|\left\langle \sum_{i=1}^{N}c_{i}k_{x_{i}}^{\left(B\right)}\left(\cdot\right),\varphi\left(\cdot\right)\right\rangle _{L^{2}\left(\mu\right)}\right|^{2}\\
 & \underset{\text{Schwarz}}{\leq} & \left\Vert \sum_{i=1}^{N}c_{i}k_{x_{i}}^{\left(B\right)}\right\Vert _{L^{2}\left(\mu\right)}^{2}\left\Vert \varphi\right\Vert _{L^{2}\left(\mu\right)}^{2}\\
 & = & \sum_{i=1}^{N}\sum_{j=1}^{N}\overline{c}_{i}c_{j}\left\langle k_{x_{i}}^{\left(B\right)},k_{x_{j}}^{\left(B\right)}\right\rangle \left\Vert \varphi\right\Vert _{L^{2}\left(\mu\right)}^{2}\\
 & = & \sum_{i=1}^{N}\sum_{j=1}^{N}\overline{c}_{i}c_{j}K^{\left(\mu\right)}\left(x_{i},x_{j}\right)\left\Vert \varphi\right\Vert _{L^{2}\left(\mu\right)}^{2}\\
 & \underset{\text{by \ensuremath{\left(\ref{enu:mu2}\right)}}}{\leq} & \sum_{i=1}^{N}\sum_{j=1}^{N}\overline{c}_{i}c_{j}K\left(x_{i},x_{j}\right)\left\Vert \varphi\right\Vert _{L^{2}\left(\mu\right)}^{2}.
\end{eqnarray*}
The desired conclusion now follows. Moreover, for $\varphi\in L^{2}\left(\mu\right)$,
$x\in X$, we have 
\[
\left\langle K\left(\cdot,x\right),T_{B}^{*}\varphi\right\rangle _{\mathscr{H}\left(X\right)}=\left\langle k_{x}^{\left(B\right)},\varphi\right\rangle _{L^{2}\left(\mu\right)}=\int_{B}\overline{k_{x}\left(b\right)}\varphi\left(b\right)d\mu\left(b\right),
\]
and the conclusion (\ref{enu:mu3}) now follows.

The remaining conclusions, including (\ref{enu:mu3})$\Rightarrow$(\ref{enu:mu1}),
now follow from one more application of \lemref{A1}. For the operator
norms, we have
\[
\left\Vert T_{B}\right\Vert _{\mathscr{H}\left(X\right)\rightarrow L^{2}\left(\mu\right)}=\left\Vert S_{B}\right\Vert _{L^{2}\left(\mu\right)\rightarrow\mathscr{H}\left(X\right)}\leq1.
\]
\end{proof}

\begin{rem}
\thmref{A7} offers a way to get non-trivial determinantal point processes
from our setting; i.e., from (\ref{eq:k5}). From a solution to (\ref{eq:k5})
we get the contractive operator $T=T_{B}$ from $\mathscr{H}\left(K\right)$
into $L^{2}\left(B,\mu\right)$. See \thmref{A7}. Then the operator
$L:=TT^{*}$ is contractive in the Hilbert space $L^{2}\left(B,\mu\right)$,
which is precisely the condition typically imposed when dealing with
determinantal processes (see, e.g., \cite{MR2552864,kulesza2012determinantal,2019arXiv190304945K}).
Of course then we get determinantal processes with probability measures
on point configurations in $B$; not in $X$.
\end{rem}

\begin{cor}
\label{cor:T1}If one of the conditions in \thmref{A7} holds, then
the operator $T_{B}^{*}T_{B}:\mathscr{H}\left(K\right)\rightarrow\mathscr{H}(K^{\left(\mu\right)})$
is contractive, and 
\begin{equation}
K^{\left(\mu\right)}\left(x,y\right)=\left(T_{B}^{*}T_{B}K_{y}\right)\left(x\right)
\end{equation}
for all $x,y\in X$. (See also (\ref{eq:A5}).)
\end{cor}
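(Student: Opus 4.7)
The plan is to first verify the formula $K^{(\mu)}(x,y) = (T_B^* T_B K_y)(x)$ by direct computation at reproducing kernels, and then upgrade this pointwise identity to the asserted operator contractivity by combining the identification $S_B = T_B^*$ from \thmref{A7} with the order relation $K^{(\mu)} \ll K$ from \defref{FS}.

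First I would test the formula on the kernel sections. Writing $K_y := K(\cdot, y) \in \mathscr{H}(K)$, definition (\ref{eq:A9}) gives $T_B K_y = k_y^{(B)} \in L^2(B,\mu)$. Since $T_B^{*} = S_B$ by (\ref{eq:A12}), applying $S_B$ to $k_y^{(B)}$ and using (\ref{eq:A11}) with the representation (\ref{eq:A10}) of $K^{(\mu)}$ yields
\begin{equation*}
\bigl(T_B^* T_B K_y\bigr)(x) \;=\; \bigl(S_B k_y^{(B)}\bigr)(x) \;=\; \int_B \overline{k_x^{(B)}(b)}\, k_y^{(B)}(b)\, d\mu(b) \;=\; K^{(\mu)}(x,y),
\end{equation*}
which is exactly the claimed reproducing-kernel identity.

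Next I would extend by linearity to the dense subspace $\mathrm{span}\{K(\cdot,x_i)\}$ of $\mathscr{H}(K)$. For $F = \sum_i c_i K(\cdot, x_i)$, linearity gives $T_B^* T_B F = \sum_i c_i K^{(\mu)}(\cdot, x_i)$, which is a finite linear combination of reproducing kernels for $K^{(\mu)}$ and thus lies in $\mathscr{H}(K^{(\mu)})$ with norm squared
\begin{equation*}
\bigl\|T_B^* T_B F\bigr\|_{\mathscr{H}(K^{(\mu)})}^{2} \;=\; \sum_{i,j} \overline{c}_i c_j K^{(\mu)}(x_i, x_j) \;\leq\; \sum_{i,j} \overline{c}_i c_j K(x_i, x_j) \;=\; \|F\|_{\mathscr{H}(K)}^{2},
\end{equation*}
where the inequality is precisely the defining condition $K^{(\mu)} \ll K$ from (\ref{eq:k1}). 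This gives contractivity on the dense subspace, and the operator then extends uniquely by continuity to a contraction $T_B^* T_B : \mathscr{H}(K) \to \mathscr{H}(K^{(\mu)})$.

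The main subtlety to watch for is conceptual rather than computational: although $T_B^{*} T_B$ is, as an operator on $\mathscr{H}(K)$, merely the composition of two contractions (and so automatically contractive into $\mathscr{H}(K)$), the statement asserts more, namely that its range actually lands in the \emph{different} Hilbert space $\mathscr{H}(K^{(\mu)})$ and that the norm bound persists with respect to the (generally larger) $\mathscr{H}(K^{(\mu)})$-norm. This is exactly what $K^{(\mu)} \ll K$ buys us via \lemref{A1} and the computation above; without that ordering hypothesis the target space would not be well defined in the statement. Once this point is recognized, the remaining work is routine extension by norm-closure as in the proof of \lemref{TB}.
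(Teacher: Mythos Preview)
Your proof is correct and follows essentially the same computation as the paper for the kernel identity: the paper evaluates $(T_B^*T_B K_y)(x)$ via the reproducing property as $\langle K_x, T_B^*T_B K_y\rangle_{\mathscr{H}(K)} = \langle T_B K_x, T_B K_y\rangle_{L^2(\mu)}$, while you reach the same integral by applying the explicit formula (\ref{eq:A11}) for $S_B=T_B^*$ to $k_y^{(B)}$; these are the same argument in two notations. Your treatment is in fact more complete than the paper's, which verifies only the formula and leaves the contractivity into $\mathscr{H}(K^{(\mu)})$ unproved; your use of $K^{(\mu)}\ll K$ to bound $\|T_B^*T_B F\|_{\mathscr{H}(K^{(\mu)})}$ on the dense span of kernel sections, together with your remark that this is genuinely stronger than mere contractivity within $\mathscr{H}(K)$, fills that gap cleanly.
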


\begin{proof}
One checks that 
\begin{align*}
\left(T_{B}^{*}T_{B}K_{y}\right)\left(x\right) & =\left\langle K_{x},T_{B}^{*}T_{B}K_{y}\right\rangle _{\mathscr{H}\left(K\right)}\\
 & =\left\langle T_{B}K_{x},T_{B}K_{y}\right\rangle _{L^{2}\left(\mu\right)}\\
 & =\int_{B}\overline{k_{x}^{\left(B\right)}\left(b\right)}k_{y}^{\left(B\right)}\left(b\right)d\mu\left(b\right)\\
 & =K^{\left(\mu\right)}\left(x,y\right)=K_{y}^{\left(\mu\right)}\left(x\right).
\end{align*}
\end{proof}

\subsection{Resistance Networks and Energy Hilbert Spaces}

A factorization as in (\ref{eq:k4}) occurs naturally in the study
of resistance networks and the corresponding energy Hilbert spaces
(see, e.g., \cite{MR3860446,MR2735315,MR3051696,MR3630401,MR3842203}). 

Let $\left(V,E\right)$ be a countably infinite network with vertices
$V$ and edges $E$. Assume it is locally finite, i.e., the set of
nearest neighbors $N\left(x\right)=\left\{ y\in V\mid y\sim x\right\} $
is finite for all $x\in V$, where $y\sim x$ iff $\left(x,y\right)\in E$.
Assume further that $\left(V,E\right)$ is connected, i.e., for all
$x,y\in V$, there exist $n\in\mathbb{N}$ and vertices $\left(x_{i}\right)_{i=0}^{n}$,
such that $x_{0}=x$, $\left(x_{i},x_{i+1}\right)\in E$ and $x_{n}=y$. 

Fix a base point $o\in V$, and a conductance function $c:E\rightarrow\mathbb{R}_{\geq0}$.
Let $\mathscr{H}_{E}$ be the energy Hilbert space of all functions
$f:V\rightarrow\mathbb{C}$ satisfying 
\begin{equation}
\left\Vert f\right\Vert _{\mathscr{H}_{E}}^{2}=\frac{1}{2}\underset{\left(x,y\right)\in E}{\sum\sum}c_{xy}\left|f\left(x\right)-f\left(y\right)\right|^{2}<\infty.\label{eq:B16}
\end{equation}

\begin{defn}
\label{def:rn}We say $\left(V,E,c\right)$ is a \emph{resistance
network}, where 
\begin{itemize}
\item $V$: a discrete set of vertices;
\item $E$: a fixed set of edges, so $E$ contained in $V\times V\backslash\left(\text{the diagonal}\right)$,
and locally finite.
\item $c$: a fixed function on $E$, representing conductance ($=1/\text{resistance}$). 
\end{itemize}
\end{defn}

The connectedness assumption implies that, for all $x,y\in V$, there
exists a constant $C_{x,y}\geq0$, and 
\begin{equation}
\left|f\left(x\right)-f\left(y\right)\right|\leq C_{x,y}\left\Vert f\right\Vert _{\mathscr{H}_{E}}.
\end{equation}
See \figref{res}.

\begin{figure}[H]
\includegraphics[width=0.3\columnwidth]{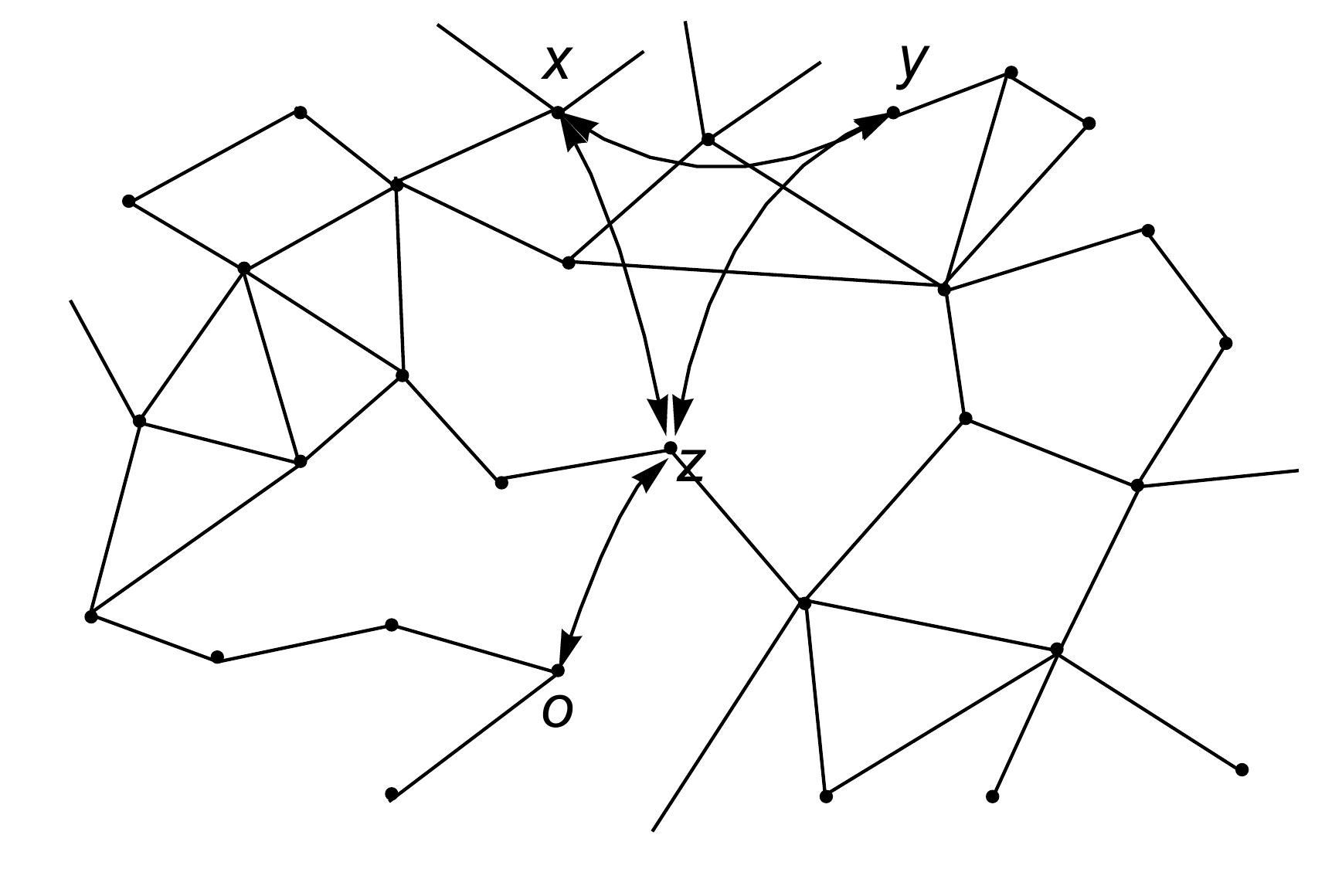}

\caption{\label{fig:res}Current flows in a connected resistance network.}

\end{figure}

By Riesz, there exists a unique $v_{x}\left(=v_{x,o}\right)\in\mathscr{H}_{E}$,
such that 
\begin{equation}
f\left(x\right)-f\left(o\right)=\left\langle v_{x},f\right\rangle _{\mathscr{H}_{E}}.\label{eq:B18}
\end{equation}
Set 
\begin{equation}
G\left(x,y\right)=\left\langle v_{x},v_{y}\right\rangle _{\mathscr{H}_{E}}=v_{y}\left(x\right)-v_{y}\left(o\right).
\end{equation}
It may be normalized by setting $v_{x}\left(o\right)=0$, for all
$x\in V$. Then, $G\left(x,y\right)=v_{y}\left(x\right)$. 

The following consideration, for the energy Hilbert space $\mathscr{H}_{E}$
in \defref{rn}, shows that the \emph{contractivity} in \thmref{A7}
is a non-trivial restriction. Specifically, let $\left(V,E,c\right)$
be as in \defref{rn} and let $\mathscr{H}_{E}$ be the energy Hilbert
space defined from (\ref{eq:B16}). Then the function 
\begin{equation}
c\left(x\right):=\sum_{y\sim x}c_{xy}\label{eq:B20}
\end{equation}
takes finite values for all $x\in V$. Moreover, the Dirac function
$\delta_{x}$, 
\[
\delta_{x}\left(y\right)=\begin{cases}
1 & y=x\\
0 & y\neq x
\end{cases}
\]
on $V$ satisfies 
\begin{equation}
\delta_{x}\in\mathscr{H}_{E},\;\text{and }\:\left\Vert \delta_{x}\right\Vert _{\mathscr{H}_{E}}^{2}=c\left(x\right).
\end{equation}
Since the function $c$ (see (\ref{eq:B20})) is generally unbounded,
it follows that the inclusion mapping
\begin{equation}
l^{2}\left(V\right)\supset span\left\{ \delta_{x}\right\} \ni\delta_{x}\xrightarrow{\;J\;}\delta_{x}\in\mathscr{H}_{E}
\end{equation}
is unbounded. 

Nonetheless, the operator 
\[
JJ^{*}:\mathscr{H}_{E}\longrightarrow\mathscr{H}_{E}
\]
is well defined. One checks that for functions $f$ on $span_{x\in V}\left\{ v_{x}\right\} $
(see (\ref{eq:B18})) we~get 
\begin{equation}
\left(JJ^{*}f\right)\left(x\right)=\sum_{y\sim x}c_{xy}\left(f\left(x\right)-f\left(y\right)\right).\label{eq:B23}
\end{equation}
The operator defined in (\ref{eq:B23}) is also unbounded; it is called
the \emph{graph-Laplacian}, written $\Delta_{E}:=JJ^{*}$. 
\begin{example}[Nearest neighbors]
Consider $V=\mathbb{Z}_{\geq0}$, $E=\left\{ \left(i,i+1\right)\right\} $,
$c_{i,i+1}=1$, and the base point at $i=0$. In this case, $\mathscr{H}_{E}$
consists of functions satisfying 
\begin{equation}
\left\Vert f\right\Vert _{\mathscr{H}_{E}}^{2}=\sum_{i=0}^{\infty}\left|f\left(i+1\right)-f\left(i\right)\right|^{2}<\infty.
\end{equation}
A solution $\left(B,\mathscr{F}_{B},\mu,k^{\left(B\right)}\right)$
to (\ref{eq:k4}) is as follows: 

Let $k_{i}^{\left(B\right)}\left(x\right)=\chi_{\left[0,i\right]}\left(x\right)$,
$i\in\mathbb{Z}_{\geq0}$, $x\in\mathbb{R}_{\geq0}$, and $\mu=$
Lebesgue measure. Then, 
\begin{align*}
\int_{0}^{\infty}\overline{k_{i}^{\left(B\right)}\left(x\right)}k_{j}^{\left(B\right)}\left(x\right)d\mu\left(x\right) & =\int_{0}^{\infty}\chi_{\left[0,i\right]}\left(x\right)\chi_{\left[0,j\right]}\left(x\right)d\mu\left(x\right)\\
 & =\mu\left(\left[0,i\right]\cap\left[0,j\right]\right)\\
 & =i\wedge j\\
 & =\left\langle v_{i},v_{j}\right\rangle _{\mathscr{H}_{E}}\\
 & =G\left(i,j\right)\\
 & =K\left(i,j\right).
\end{align*}
\end{example}

\subsection{Product Boundaries}

We note that, as a consequence of \thmref{A7}, we get the following
result for products of p.d. functions, and their corresponding boundaries;
see \defref{FS}, (\ref{eq:k4}) and (\ref{eq:k5}).
\begin{cor}
Let $K_{i}:X\times X\rightarrow\mathbb{C}$, $i=1,2$, be positive
definite functions. Let $\left(B_{i},\mathscr{F}_{B_{i}},\mu_{i},k^{\left(B_{i}\right)}\right)$,
$i=1,2$, be as specified in \defref{FS}. Then if each $\left(B_{i},\mathscr{F}_{B_{i}},\mu_{i},k^{\left(B_{i}\right)}\right)$
satisfies (\ref{eq:k4}), respectively (\ref{eq:k5}) relative to
$K_{i}$, $i=1,2$, then $B_{1}\times B_{2}$, $\mathscr{F}_{B_{1}\times B_{2}}$,
$\mu_{1}\times\mu_{2}$, $k_{x}^{\left(B_{1}\right)}k_{x}^{\left(B_{2}\right)}$,
will also satisfy (\ref{eq:k4}), respectively (\ref{eq:k5}), now
w.r.t. the product p.d. function 
\begin{equation}
\left(K_{1}K_{2}\right)\left(x,y\right)=K_{1}\left(x,y\right)K_{2}\left(x,y\right),\quad\text{for }\left(x,y\right)\in X\times X.\label{eq:pd1}
\end{equation}
 
\end{cor}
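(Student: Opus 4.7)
The plan is to prove the two claims in parallel, separating the multiplicative factorization of the integral (just Fubini) from the preservation of the order relation $\ll$ (which requires the Schur product theorem). Throughout I set $k_{x}^{\left(B_{1}\times B_{2}\right)}\left(b_{1},b_{2}\right):=k_{x}^{\left(B_{1}\right)}\left(b_{1}\right)\,k_{x}^{\left(B_{2}\right)}\left(b_{2}\right)$.

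First I would check that $k_{x}^{\left(B_{1}\times B_{2}\right)}\in L^{2}\left(B_{1}\times B_{2},\mu_{1}\times\mu_{2}\right)$ --- immediate from Tonelli applied to the nonnegative integrand $\left|k_{x}^{\left(B_{1}\right)}\right|^{2}\left|k_{x}^{\left(B_{2}\right)}\right|^{2}$ --- and then invoke Fubini to factor
\[
\int_{B_{1}\times B_{2}}\overline{k_{x}^{\left(B_{1}\times B_{2}\right)}}\,k_{y}^{\left(B_{1}\times B_{2}\right)}\,d\left(\mu_{1}\times\mu_{2}\right)=K_{1}^{\left(\mu_{1}\right)}\left(x,y\right)\,K_{2}^{\left(\mu_{2}\right)}\left(x,y\right).
\]
If each factor realizes (\ref{eq:k4}), i.e., $K_{i}^{\left(\mu_{i}\right)}=K_{i}$, this immediately gives (\ref{eq:k4}) for the product kernel. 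In the (\ref{eq:k5}) case it yields $\left(K_{1}K_{2}\right)^{\left(\mu_{1}\times\mu_{2}\right)}=K_{1}^{\left(\mu_{1}\right)}K_{2}^{\left(\mu_{2}\right)}$, reducing the sub-boundary claim to verifying the order relation $K_{1}^{\left(\mu_{1}\right)}K_{2}^{\left(\mu_{2}\right)}\ll K_{1}K_{2}$.

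For that last step, the only substantive one, I would exploit the telescoping identity
\[
K_{1}K_{2}-K_{1}^{\left(\mu_{1}\right)}K_{2}^{\left(\mu_{2}\right)}=K_{1}\cdot\bigl(K_{2}-K_{2}^{\left(\mu_{2}\right)}\bigr)+\bigl(K_{1}-K_{1}^{\left(\mu_{1}\right)}\bigr)\cdot K_{2}^{\left(\mu_{2}\right)},
\]
where $\cdot$ denotes the pointwise (Hadamard) product on $X\times X$. Each of the four pointwise factors on the right is a positive definite kernel: $K_{1}$ by hypothesis, $K_{2}^{\left(\mu_{2}\right)}$ as a Gramian in $L^{2}\left(\mu_{2}\right)$, and the two differences $K_{i}-K_{i}^{\left(\mu_{i}\right)}$ by the standing hypothesis $K_{i}^{\left(\mu_{i}\right)}\ll K_{i}$. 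By the Schur product theorem, the Hadamard product of two p.d. kernels is p.d., so each summand on the right is p.d., and hence so is the difference on the left --- which is precisely the desired relation. The only non-trivial ingredient is Schur's theorem; the rest is Fubini and an algebraic rearrangement, so I do not anticipate any real obstacle.
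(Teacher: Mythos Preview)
Your proof is correct. Both your argument and the paper's reduce to showing $K_1^{(\mu_1)}K_2^{(\mu_2)}\ll K_1K_2$ (after the same Fubini step), but you handle this inequality differently. The paper works operator-theoretically: it identifies $\mathscr{H}(K_1K_2)$ with the closed span of $\{K_1(\cdot,x)\otimes K_2(\cdot,x)\}$ inside $\mathscr{H}(K_1)\otimes\mathscr{H}(K_2)$, and then observes that the tensor product $T_{B_1}\otimes T_{B_2}$ of two contractions (resp.\ isometries) is again a contraction (resp.\ isometry) into $L^2(\mu_1)\otimes L^2(\mu_2)\cong L^2(\mu_1\times\mu_2)$. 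You instead stay at the level of kernels, telescoping the difference and invoking the Schur product theorem twice. Your route is more self-contained---it avoids setting up the tensor-product identification of the RKHSs---while the paper's route ties the result back into the $T_B$ framework of \thmref{A7} and makes the isometric case transparent without a separate argument. The two methods are in fact cognate: contractivity of tensor products of contractions is essentially the operator form of Schur's theorem.
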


\begin{proof}
First note that if $\mathscr{L}_{i}$, $i=1,2$, are Hilbert spaces
and $l_{i}:X\rightarrow\mathscr{L}_{i}$, $i=1,2$, functions such
that 
\begin{equation}
K_{i}\left(x,y\right)=\left\langle l_{i}\left(x\right),l_{i}\left(y\right)\right\rangle _{\mathscr{L}_{i}},\quad i=1,2,
\end{equation}
and for $K$ in (\ref{eq:pd1}) we get 
\begin{equation}
K\left(x,y\right)=\left\langle l_{1}\left(x\right)\otimes l_{2}\left(x\right),l_{1}\left(y\right)\otimes l_{2}\left(y\right)\right\rangle _{\mathscr{L}_{1}\otimes\mathscr{L}_{2}}\label{eq:pd3}
\end{equation}
where we use the tensor inner product of $\mathscr{L}_{1}\otimes\mathscr{L}_{2}$
on the RHS in (\ref{eq:pd3})

We may now apply \thmref{A7} to this with 
\begin{equation}
k_{x}^{\left(B_{1}\times B_{2}\right)}\left(b_{1},b_{2}\right)=k_{x}^{\left(B_{1}\right)}\left(b_{1}\right)k_{x}^{\left(B_{2}\right)}\left(b_{2}\right),\quad\text{for }\left(b_{1},b_{2}\right)\in B_{1}\times B_{2}.
\end{equation}
The desired conclusion follows from consideration of the operator
$T_{B_{i}}:\mathscr{H}\left(K_{i}\right)\rightarrow L^{2}\left(B_{i},\mu_{i}\right)$,
$i=1,2$, from (\ref{eq:A9}) in \lemref{TB}.

The relevant implications are as follows: If each $T_{B_{i}}$ is
isometric (resp., contractive), then $T_{B_{1}}\otimes T_{B_{2}}:\mathscr{H}\left(K\right)\rightarrow L^{2}\left(B_{1}\times B_{2},\mu_{1}\times\mu_{2}\right)$
is also isometric (resp., contractive.)
\end{proof}

\section{Frames and Associated Kernels}

Starting with a frame in Hilbert space, we discuss associated positive
definite kernels $K$ and their boundary representations, in the sense
of \defref{FS}.
\begin{defn}
\label{def:C1}Let $\mathscr{H}$ be a separable Hilbert space. A
system of vectors $\left\{ \varphi_{n}\right\} _{n\in\mathbb{N}}$
is said to be a \emph{frame} in $\mathscr{H}$, if there exist constants
$0<A\leq B<\infty$, such that 
\begin{equation}
A\left\Vert x\right\Vert ^{2}\leq\sum_{n}\left|\left\langle \varphi_{n},x\right\rangle \right|^{2}\leq B\left\Vert x\right\Vert ^{2}\label{eq:C1}
\end{equation}
holds for all $x\in\mathscr{H}$. If $A=B=1$, the system $\left\{ \varphi_{n}\right\} _{n\in\mathbb{N}}$
is called a \emph{Parseval frame}.
\end{defn}

\begin{cor}
Let $\left(B,\mathscr{F}_{B},\mu,k^{\left(B\right)}\right)$ and $K^{\left(\mu\right)}$
be as in (\ref{eq:k5}). In particular, $K^{\left(\mu\right)}\ll K$
(also see (\ref{eq:A10})). Let $T_{B}:\mathscr{H}\left(K\right)\rightarrow L^{2}\left(B,\mu\right)$
be the contraction from \lemref{TB}, and $T_{B}^{*}$ be the adjoint
in (\ref{eq:A11}). 

Let $T_{B}=V\left|T_{B}\right|$ be the corresponding polar decomposition,
i.e., $\left|T_{B}\right|=\left(T_{B}^{*}T_{B}\right)^{1/2}$ and
$V$ is the unique partial isometry from $\overline{ran\left(T_{B}^{*}\right)}$
onto $\overline{ran\left(T_{B}\right)}$. Suppose $\left\{ e_{n}\right\} _{n\in\mathbb{N}}$
is an orthonormal basis (ONB) in $L^{2}\left(B,\mu\right)$, then
$\left\{ V^{*}e_{n}\right\} _{n\in\mathbb{N}}$ is a Parseval frame
in $\mathscr{H}\left(K\right)\ominus ker\left(T_{B}\right)$. 
\end{cor}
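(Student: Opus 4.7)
The plan is to reduce the statement to the standard fact that pulling back an orthonormal basis via a co-isometry (or more generally via a partial isometry, onto its initial space) yields a Parseval frame there. All the required tools are in the statement itself via the polar decomposition.

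First I would identify the relevant subspace explicitly. Since $T_B$ is a bounded operator between Hilbert spaces, one has the orthogonal decomposition
\[
\mathscr{H}(K)=\ker(T_B)\oplus\overline{ran(T_B^{*})},
\]
so $\mathscr{H}(K)\ominus \ker(T_B)=\overline{ran(T_B^{*})}$. This is exactly the initial space of the partial isometry $V$ in the polar decomposition $T_B=V|T_B|$. Consequently $V^{*}V$ is the orthogonal projection $P$ onto $\overline{ran(T_B^{*})}$, and $\|Vx\|_{L^{2}(\mu)}=\|x\|_{\mathscr{H}(K)}$ for every $x\in \overline{ran(T_B^{*})}$.

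Next, I would verify that the vectors $V^{*}e_n$ actually live in $\mathscr{H}(K)\ominus \ker(T_B)$. Viewing $V$ as a bounded operator $\mathscr{H}(K)\to L^{2}(B,\mu)$ that vanishes on $\ker(T_B)$, the adjoint $V^{*}$ maps $L^{2}(B,\mu)$ into $\overline{ran(T_B^{*})}=\mathscr{H}(K)\ominus \ker(T_B)$, so each $V^{*}e_n$ sits in the right subspace.

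Finally, I would verify the Parseval identity. For any $x\in\mathscr{H}(K)\ominus \ker(T_B)$,
\[
\sum_{n}\bigl|\langle V^{*}e_n,x\rangle_{\mathscr{H}(K)}\bigr|^{2}
=\sum_{n}\bigl|\langle e_n,Vx\rangle_{L^{2}(\mu)}\bigr|^{2}
=\|Vx\|_{L^{2}(\mu)}^{2}=\|x\|_{\mathscr{H}(K)}^{2},
\]
where the second equality uses Parseval for the ONB $\{e_n\}$ in $L^{2}(B,\mu)$, and the third uses that $V$ is isometric on its initial space $\overline{ran(T_B^{*})}$. This is precisely the Parseval frame condition (the $A=B=1$ case of \defref{C1}).

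There is no real obstacle here; the only thing to be careful about is tracking initial versus final space of the partial isometry $V$ and confirming that $\overline{ran(T_B^{*})}$ coincides with $\mathscr{H}(K)\ominus\ker(T_B)$, so that $V$ is truly isometric on the subspace where we claim the Parseval identity.
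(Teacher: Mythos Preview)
Your proposal is correct and follows essentially the same route as the paper: both use that $V$ is isometric on its initial space $\overline{ran(T_B^{*})}=\mathscr{H}(K)\ominus\ker(T_B)$, then transfer Parseval's identity for the ONB $\{e_n\}$ in $L^{2}(B,\mu)$ via $V^{*}$. The paper writes the computation in polarized form $\langle F,G\rangle_{\mathscr{H}(K)}=\sum_n\langle F,V^{*}e_n\rangle\langle V^{*}e_n,G\rangle$ whereas you use the norm form, and you are slightly more explicit in checking that $V^{*}e_n$ actually lands in $\mathscr{H}(K)\ominus\ker(T_B)$; otherwise the arguments coincide.
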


\begin{proof}[Proof (sketch)]
 Note that $V=T_{B}\left(T_{B}^{*}T_{B}\right)^{-1/2}$ when restricted
to $\mathscr{H}\left(K\right)\ominus ker\left(T_{B}\right)$. Then,
for all $F,G\in\mathscr{\mathscr{H}\left(K\right)\ominus}ker\left(T_{B}\right)$,
we have 
\begin{align*}
\left\langle F,G\right\rangle _{\mathscr{H}\left(K\right)} & =\left\langle VF,VG\right\rangle _{L^{2}\left(\mu\right)}\\
 & =\sum_{n}\left\langle VF,e_{n}\right\rangle _{L^{2}\left(\mu\right)}\left\langle e_{n},VG\right\rangle _{L^{2}\left(\mu\right)}\\
 & =\sum_{n}\left\langle F,V^{*}e_{n}\right\rangle _{\mathscr{H}\left(K\right)}\left\langle V^{*}e_{n},G\right\rangle _{\mathscr{H}\left(K\right)},
\end{align*}
and the assertion follows. 
\end{proof}

\begin{lem}
Let $K,L:X\times X\rightarrow\mathbb{C}$ be p.d. kernels such that
$K\ll L$. Let $\mathscr{H}\left(K\right)$, $\mathscr{H}\left(L\right)$
be the respective RKHSs. 

Suppose $\mathscr{H}\left(K\right)$ is separable, then there exists
a Hilbert space $\mathscr{K}$ and a contraction $T:\mathscr{H}\left(L\right)\rightarrow\mathscr{K}$,
such that 
\[
K_{y}\left(x\right)=\left(T^{*}TL_{y}\right)\left(x\right),\quad\forall x,y\in X.
\]
\end{lem}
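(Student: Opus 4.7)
My plan is to take $\mathscr{K} := \mathscr{H}(K)$ and to use for $T$ the contraction $T_{L \to K}$ already constructed in the paper immediately after \defref{k1}. Concretely, $T$ is the extension by linearity and norm closure of the assignment $L(\cdot,x) \mapsto K(\cdot,x)$.

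First I would spell out why this $T$ is well-defined and contractive. The hypothesis $K \ll L$ says that, for every finite family $\{x_i\}_{i=1}^{N} \subset X$ and every $\{c_i\}_{i=1}^{N} \subset \mathbb{C}$,
\[
\Bigl\|\sum\nolimits_{i} c_i K(\cdot,x_i)\Bigr\|_{\mathscr{H}(K)}^{2} = \sum_{i,j} \overline{c_i} c_j K(x_i,x_j) \;\le\; \sum_{i,j} \overline{c_i} c_j L(x_i,x_j) = \Bigl\|\sum\nolimits_{i} c_i L(\cdot,x_i)\Bigr\|_{\mathscr{H}(L)}^{2}.
\]
This bound makes the prescription $\sum c_i L(\cdot,x_i) \mapsto \sum c_i K(\cdot,x_i)$ well-defined (by passing to the quotient by its kernel) and contractive on the dense subspace $\mathrm{span}\{L(\cdot,x)\}_{x \in X} \subset \mathscr{H}(L)$; it therefore extends to a global contraction $T : \mathscr{H}(L) \to \mathscr{H}(K)$ with $T L(\cdot,y) = K(\cdot,y)$ for every $y \in X$.

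Next I would verify the factorization. Since $T^{*}T L_y$ lies in $\mathscr{H}(L)$, its pointwise value at $x$ is given by the reproducing property in $\mathscr{H}(L)$ together with the defining identity for the adjoint:
\[
(T^{*}T L_y)(x) = \langle L(\cdot,x),\, T^{*}T L(\cdot,y)\rangle_{\mathscr{H}(L)} = \langle T L(\cdot,x),\, T L(\cdot,y)\rangle_{\mathscr{H}(K)} = \langle K(\cdot,x),\, K(\cdot,y)\rangle_{\mathscr{H}(K)} = K_y(x),
\]
which is exactly the identity asserted by the lemma.

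There is really no obstacle here: the statement is essentially a repackaging of the construction of $T_{L \to K}$ given after \defref{k1}, combined with the calculation already carried out for $T_{B}^{*}T_{B}$ in \corref{T1}. The separability hypothesis on $\mathscr{H}(K)$ plays no essential role in the argument; it is presumably included only to allow one to replace $\mathscr{H}(K)$ by any separable Hilbert space $\mathscr{K}$ of the same dimension (e.g.\ $\ell^{2}$) via a unitary intertwiner, or to permit later passage to an orthonormal basis in $\mathscr{K}$ in the spirit of the Parseval-frame corollary immediately above.
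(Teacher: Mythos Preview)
Your proof is correct. Taking $\mathscr{K}=\mathscr{H}(K)$ and $T=T_{L\to K}$, the contraction already built right after \defref{k1}, gives the factorization by exactly the computation you wrote, which is the same as the one in \corref{T1}. Separability is indeed not used in your argument.

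The paper, however, argues differently. It invokes the separability hypothesis to fix a Parseval frame $\{\varphi_n\}$ in $\mathscr{H}(K)$, so that $K(x,y)=\sum_n \varphi_n(x)\overline{\varphi_n(y)}$, and then defines $T:\mathscr{H}(L)\to\ell^2$ by $TL_y=(\overline{\varphi_n(y)})_{n}$, checking contractivity and the formula for $T^*$ by hand. The two arguments reach the same destination, but what the paper's route buys is an explicit $\ell^2$-model: it exhibits $\mathscr{K}=\ell^2=L^2(\mathbb{N},\text{counting measure})$, which ties the lemma into the surrounding Section~3 theme that any Parseval frame realizes $(\mathbb{N},\text{counting measure})$ as a boundary in the sense of \defref{FS}. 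Your route is cleaner and more general (it drops the separability assumption entirely), while the paper's version makes the connection to the discrete boundary picture and the Karhunen--Lo\`eve-type expansion in the following theorem more transparent.
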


\begin{proof}
Choose a Parseval frame $\left\{ \varphi_{n}\right\} $ in $\mathscr{H}\left(K\right)$.
Then, in particular, 
\begin{align*}
K\left(x,y\right) & =\left\langle K_{x},K_{y}\right\rangle _{\mathscr{H}\left(K\right)}\\
 & =\sum_{n}\left\langle K_{x},\varphi_{n}\right\rangle _{\mathscr{H}\left(K\right)}\left\langle \varphi_{n},K_{y}\right\rangle _{\mathscr{H}\left(K\right)}\\
 & =\sum_{n}\varphi_{n}\left(x\right)\overline{\varphi_{n}\left(y\right)}.
\end{align*}
For all $L_{y}\left(\cdot\right)=L\left(\cdot,y\right)\in\mathscr{H}\left(L\right)$,
set 
\[
TL_{y}=\left(\overline{\varphi_{n}\left(y\right)}\right)_{n\in\mathbb{N}}\in l^{2}.
\]
Since 
\[
\left\Vert TL_{y}\right\Vert _{l^{2}}^{2}=\sum_{n}\left|\varphi_{n}\left(y\right)\right|^{2}=K\left(y,y\right)\leq L\left(y,y\right)=\left\Vert L_{y}\right\Vert _{\mathscr{H}\left(L\right)}^{2},
\]
then $T$ extends to a contraction, $\mathscr{H}\left(L\right)\rightarrow l^{2}$.

One checks that, the adjoint $T^{*}$ is given by 
\[
\left(T^{*}\xi\right)\left(x\right)=\sum_{n}\varphi_{n}\left(x\right)\xi_{n},\quad\forall\xi\in l^{2}.
\]
Then, it follows that 
\[
\left(T^{*}TL_{y}\right)\left(x\right)=\sum_{n}\varphi_{n}\left(x\right)\overline{\varphi_{n}\left(y\right)}=K_{y}\left(x\right),\quad\forall x,y\in X.
\]

Therefore the assertion follows with $\mathscr{K}=l^{2}$. 
\end{proof}
If (\ref{eq:C1}) in \defref{C1} has $A=B=1$, then we say that $\left\{ \varphi_{n}\right\} _{n\in\mathbb{N}}$
is a Parseval frame.
\begin{thm}
Let $X\times X\xrightarrow{\;K\;}\mathbb{C}$ be a p.d. kernel, and
let $\mathscr{H}\left(K\right)$ be the corresponding reproducing
kernel Hilbert space (RKHS). Assume $\left\{ \varphi_{n}\right\} _{n\in\mathbb{N}}\subset\mathscr{H}\left(K\right)$
is a Parseval frame in $\mathscr{H}\left(K\right)$, then the following
two assertions hold:
\begin{enumerate}
\item \label{enu:Cg1}For all $\left(x,y\right)\in X\times X$, one has
\[
K\left(x,y\right)=\sum_{n\in\mathbb{N}}\varphi_{n}\left(x\right)\overline{\varphi_{n}\left(y\right)},
\]
and so a pair $\left(\mathbb{N},\text{counting measure}\right)$ satisfies
the boundary condition (\ref{eq:k4}) in \defref{FS}.
\item \label{enu:Cg2}Let $\left\{ Z_{n}\right\} _{n\in\mathbb{N}}$ be
an i.i.d. system of $N\left(0,1\right)$ random variables, i.e., a
system of independent, identically distributed standard Gaussians.
Then 
\begin{equation}
V_{x}\left(\cdot\right)=\sum_{n\in\mathbb{N}}\varphi_{n}\left(x\right)Z_{n}\left(\cdot\right)\label{eq:C2}
\end{equation}
is a Gaussian process, indexed by $X$, and with covariance kernel
\[
\mathbb{E}\left(V_{x}\overline{V}_{y}\right)=K\left(x,y\right),\quad\forall\left(x,y\right)\in X\times X;
\]
see also (\ref{eq:g4}).
\end{enumerate}
\end{thm}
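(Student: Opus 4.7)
The plan is to handle the two claims in sequence, exploiting that for a Parseval frame $\{\varphi_n\}$ in $\mathscr{H}(K)$ we have the parseval expansion $\langle F,G\rangle_{\mathscr{H}(K)}=\sum_n\langle F,\varphi_n\rangle\langle\varphi_n,G\rangle$ for all $F,G\in\mathscr{H}(K)$ (this is essentially the content of the previous lemma's computation).

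For part (\ref{enu:Cg1}), the idea is to insert $F=K(\cdot,x)$ and $G=K(\cdot,y)$ into the Parseval identity. By the reproducing property (\ref{eq:A1}), one has $\langle K(\cdot,x),\varphi_n\rangle_{\mathscr{H}(K)}=\varphi_n(x)$ and likewise $\langle\varphi_n,K(\cdot,y)\rangle_{\mathscr{H}(K)}=\overline{\varphi_n(y)}$. Combined with $\langle K(\cdot,x),K(\cdot,y)\rangle_{\mathscr{H}(K)}=K(x,y)$ (again by the reproducing property plus Hermitian symmetry of $K$), this yields the claimed expansion
\[
K(x,y)=\sum_{n\in\mathbb{N}}\varphi_n(x)\overline{\varphi_n(y)}.
\]
In particular, taking $y=x$, $\sum_n|\varphi_n(x)|^2=K(x,x)<\infty$, and the pair $(\mathbb{N},\text{counting measure})$ together with $k_x^{(\mathbb{N})}(n)=\overline{\varphi_n(x)}$ realizes (\ref{eq:k4}).

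For part (\ref{enu:Cg2}), I would first establish convergence of the series (\ref{eq:C2}) in $L^2(\Omega,\mathbb{P})$. Since the $Z_n$ are i.i.d. $N(0,1)$, the partial sums $S_N(x):=\sum_{n=1}^N\varphi_n(x)Z_n$ satisfy $\|S_N(x)-S_M(x)\|_{L^2(\mathbb{P})}^2=\sum_{n=M+1}^N|\varphi_n(x)|^2$, which tends to $0$ by the square-summability established in part (\ref{enu:Cg1}). Hence $V_x$ is a well-defined element of $L^2(\Omega,\mathbb{P})$.

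Gaussianity of $V_x$ follows because each $S_N(x)$ is a finite linear combination of i.i.d. Gaussians, hence Gaussian, and the class of (complex) centered Gaussians is closed under $L^2$-limits; the same argument applied to arbitrary finite linear combinations $\sum_k c_k V_{x_k}$ shows that $\{V_x\}_{x\in X}$ is a Gaussian system. For the covariance, independence gives $\mathbb{E}(Z_n Z_m)=\delta_{n,m}$, so by continuity of the inner product in $L^2(\mathbb{P})$,
\[
\mathbb{E}\left(V_x\overline{V_y}\right)=\sum_{n,m}\varphi_n(x)\overline{\varphi_m(y)}\,\mathbb{E}(Z_nZ_m)=\sum_n\varphi_n(x)\overline{\varphi_n(y)}=K(x,y),
\]
where the last equality uses part (\ref{enu:Cg1}). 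The only mild subtlety is the justification of interchanging the expectation with the double sum, which is handled by the absolute summability guaranteed by the Cauchy--Schwarz bound $|\varphi_n(x)\overline{\varphi_n(y)}|\leq\tfrac12(|\varphi_n(x)|^2+|\varphi_n(y)|^2)$ together with part (\ref{enu:Cg1}) applied at the diagonal points $(x,x)$ and $(y,y)$.
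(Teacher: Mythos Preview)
Your proposal is correct and follows essentially the same approach as the paper's proof: part~(\ref{enu:Cg1}) via the Parseval identity applied to $K(\cdot,x)$ and $K(\cdot,y)$ together with the reproducing property (this is the computation carried out in the lemma immediately preceding the theorem), and part~(\ref{enu:Cg2}) via the i.i.d.\ property of the $Z_n$. The paper's own proof is only a two-line sketch; you have simply filled in the details (convergence of the series in $L^2(\mathbb{P})$, closure of Gaussians under $L^2$-limits, and the justification of the interchange in the covariance computation), all of which are correct.
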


\begin{proof}[Proof (sketch)]
 We already proved (\ref{enu:Cg1}) as part of \thmref{A7}. The
proof of (\ref{enu:Cg2}) follows from the i.i.d. property for the
choice of $\left\{ Z_{n}\right\} _{n\in\mathbb{N}}$ system; also
called a Monte Carlo simulation; see e.g., \cite{doi:10.1002/9781119414421.ch2,AD92,MR3526117}.
\end{proof}

\section{The Drury-Arveson Kernel}

The Drury-Arveson kernel is defined on the unit ball in $k$ complex
dimensions; or more generally on the unit ball in a specified Hilbert
space; see (\ref{eq:k18}) below. It has been studied extensively,
first in \cite{MR480362,MR1668582}, and in \cite{MR2254502,MR3721329,Jorgensen2017,MR3526117}.
A key result from \cite{MR1668582} which motivated our present analysis
is the following: If $k>1$, then the corresponding Drury-Arveson
kernel does not admit a boundary space which is contained in $\mathbb{C}^{k}$.

The discussion above further motivates a recent renewed interest in
the Drury-Arveson kernel and its harmonic analysis. In addition to
the papers cited, we add, \cite{SA19,MR3780504,MR3912786,MR3834659,MR3797923,MR3691998}.
\begin{example}[Drury-Arveson kernel; see e.g., \cite{MR1668582}]
\label{exa:D1} Let 
\begin{align}
X=B_{k} & :=\left\{ z=\left(z_{i}\right)\in\mathbb{C}^{k}\mathrel{;}\sum\nolimits _{i=1}^{k}\left|z_{i}\right|^{2}<1\right\} ,\label{eq:B1}\\
\partial B_{k} & :=\left\{ z=\left(z_{i}\right)\in\mathbb{C}^{k}\mathrel{;}\sum\nolimits _{i=1}^{k}\left|z_{i}\right|^{2}=1\right\} .\label{eq:B2}
\end{align}
Note 
\begin{equation}
\partial B_{k}\simeq U_{k}\big/U_{k-1}
\end{equation}
where $U_{k}:=$ all unitary $k\times k$ matrices on $\mathbb{C}$.
Pick the normalized Haar measure $\lambda_{k}$ on $U_{k}$ and set
\begin{equation}
\mu_{k}:=\lambda_{k}\circ\pi_{k}^{-1}\label{eq:k10}
\end{equation}
where 
\begin{equation}
\pi_{k}:U_{k}\rightarrow U_{k}\big/U_{k-1}\simeq\partial B_{k}.\label{eq:k11}
\end{equation}
We note that $U_{k}$ acts on $\partial B_{k}$ via (\ref{eq:k11})
and that $\mu_{k}$ (see (\ref{eq:k10})) is $U_{k}$ invariant, i.e.,
we have a compact homogeneous space for the Lie group $U_{k}$. 

Let $\left\langle \cdot,\cdot\right\rangle _{\mathbb{C}^{k}}$ denote
the standard inner product in $\mathbb{C}^{k}$, i.e., $\left\langle z,w\right\rangle _{\mathbb{C}^{k}}:=\sum_{j=1}^{k}\overline{z}_{j}w_{k}$,
and set 
\begin{equation}
K\left(z,w\right)=K_{DA}\left(z,w\right):=\frac{1}{1-\left\langle z,w\right\rangle _{\mathbb{C}^{k}}},\quad\left(z,w\right)\in B_{k}\times B_{k},\label{eq:B6}
\end{equation}
and 
\begin{equation}
K_{z}^{\left(\mu\right)}\left(b\right):=\frac{1}{1-\left\langle z,b\right\rangle _{\mathbb{C}^{k}}},\quad z\in B_{k},\:b\in\partial B_{k};\label{eq:B7}
\end{equation}
see (\ref{eq:B1}) and (\ref{eq:B2}).

We shall show that the assignment 
\begin{equation}
K\left(z,\cdot\right)\longmapsto\left(K_{z}^{\left(\mu\right)}\;\text{on \ensuremath{\partial B_{k}}}\right)\label{eq:B8}
\end{equation}
extends to a linear contractive operator $T_{B}:\mathscr{H}\left(K\right)\rightarrow L^{2}\left(\partial B_{k},\mu\right)$
with dense range in $L^{2}\left(\partial B_{k},\mu\right)$; see \corref{B3},
below.

Let $\mathscr{H}\left(K\right)$ be the symmetric Fock space, i.e.,
\begin{equation}
\mathscr{H}\left(K\right)=\mathbb{C}\oplus\left(\sum_{1}^{\infty}\otimes_{1}^{n}\mathbb{C}^{k}\right)^{\text{sym}}.
\end{equation}
Every $F\in\mathscr{H}\left(K\right)$ has a unique representation
\begin{equation}
F\left(z\right)=\sum_{n=0}^{\infty}\left\langle \xi_{n},z^{\otimes n}\right\rangle _{\mathscr{H}_{n}^{\text{sym}}},\quad z\in\mathbb{C}^{k}\label{eq:k7}
\end{equation}
where $\left(\xi_{n}\right)\in\left(\otimes_{1}^{n}\mathbb{C}^{k}\right)^{\text{sym}}=:\mathscr{H}_{n}^{\text{sym}}$,
and $\xi_{0}\in\mathbb{C}=:\mathscr{H}_{0}^{\text{sym}}$. Moreover,
\begin{equation}
\left\Vert F\right\Vert _{\mathscr{H}\left(K\right)}^{2}=\sum_{n=0}^{\infty}\left\Vert \xi_{n}\right\Vert _{\mathscr{H}_{n}^{\text{sym}}}^{2}
\end{equation}
where $\left\Vert \cdot\right\Vert _{\mathscr{H}_{n}^{\text{sym}}}$
is the usual Hilbert norm in the symmetric tensor space of order $n$. 
\end{example}

\begin{thm}
\label{thm:B2}Let $k$ be fixed, and let 
\begin{equation}
K\left(z,w\right):=\frac{1}{1-\left\langle z,w\right\rangle _{\mathbb{C}^{k}}},\quad\left(z,w\right)\in B_{k}\times B_{k}\label{eq:k18}
\end{equation}
be the Drury-Arveson kernel. Set
\begin{equation}
K^{\left(\mu\right)}\left(z,w\right)=\int_{\partial B_{k}}\overline{\frac{1}{1-\left\langle b,z\right\rangle _{\mathbb{C}^{k}}}}\frac{1}{1-\left\langle b,w\right\rangle _{\mathbb{C}^{k}}}d\mu\left(b\right),\label{eq:k19}
\end{equation}
then 
\[
K^{\left(\mu\right)}\ll K.
\]
\end{thm}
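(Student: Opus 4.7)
The plan is to expand both kernels as power series in $\langle z,w\rangle_{\mathbb{C}^k}$ and compare coefficients term by term. The geometric series gives $K(z,w)=\sum_{n\geq 0}\langle z,w\rangle^n$ on $B_k\times B_k$. Using $\overline{\langle b,z\rangle}=\langle z,b\rangle$, the integrand in the definition of $K^{(\mu)}$ factors as $\sum_{n,m\geq 0}\langle z,b\rangle^n\langle b,w\rangle^m$; these series converge uniformly in $b\in\partial B_k$ when $z,w$ range over compact subsets of $B_k$. Dominated convergence therefore reduces the proof to computing the scalar integrals
\[
I_{n,m}(z,w):=\int_{\partial B_k}\langle z,b\rangle^n\langle b,w\rangle^m\,d\mu(b).
\]

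To evaluate $I_{n,m}$, I would apply the multinomial theorem,
\[
\langle z,b\rangle^n\langle b,w\rangle^m=\sum_{|\alpha|=n,\,|\beta|=m}\binom{n}{\alpha}\binom{m}{\beta}\,\bar z^\alpha w^\beta\,b^\alpha\bar b^\beta,
\]
and then invoke the classical sphere-moment identity
\[
\int_{\partial B_k} b^\alpha\bar b^\beta\,d\mu(b)=\delta_{\alpha,\beta}\,\frac{(k-1)!\,\alpha!}{(|\alpha|+k-1)!},
\]
which is itself a consequence of the $U_k$-invariance of $\mu=\lambda_k\circ\pi_k^{-1}$ (the coordinate-wise rotations $b_j\mapsto e^{i\theta_j}b_j$ already kill off-diagonal terms). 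This immediately forces $I_{n,m}=0$ for $n\neq m$, and for $n=m$ a short calculation using the identity $\sum_{|\alpha|=n}\bar z^\alpha w^\alpha/\alpha!=\langle z,w\rangle^n/n!$ yields
\[
I_{n,n}(z,w)=c_n\,\langle z,w\rangle^n,\qquad c_n=\frac{(k-1)!\,n!}{(n+k-1)!}=\binom{n+k-1}{n}^{-1}\in(0,1],
\]
with $c_n<1$ precisely when $k>1$ and $n\geq 1$. (Alternatively, one can obtain the same conclusion from Schur's lemma applied to the irreducible $U_k$-representation on $\mathrm{Sym}^n\mathbb{C}^k$.)

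Putting the two expansions together yields
\[
K(z,w)-K^{(\mu)}(z,w)=\sum_{n=0}^{\infty}(1-c_n)\,\langle z,w\rangle^n.
\]
Each $\langle z,w\rangle^n$ is positive definite as an $n$-fold Schur (Hadamard) power of the manifestly p.d.\ kernel $\langle z,w\rangle_{\mathbb{C}^k}$, and all coefficients $1-c_n$ are non-negative. Hence $K-K^{(\mu)}$ is a non-negative combination of p.d.\ kernels, so itself p.d.; this is precisely the order relation $K^{(\mu)}\ll K$ of Definition \ref{def:k1}.

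The main obstacle I anticipate is the bookkeeping that produces the explicit value of $c_n$---keeping the multinomial coefficients, factorials, and the sphere moments consistent---together with the justification of interchanging summation and integration. Once those two items are handled, the remaining steps (positive-definiteness of the Schur powers and the observation that a non-negative sum of p.d.\ kernels is p.d.) are immediate, and in fact they yield the stronger quantitative statement that $K-K^{(\mu)}$ is itself a Drury-Arveson-type kernel with explicitly computable Taylor coefficients.
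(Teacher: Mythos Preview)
Your proof is correct and in fact yields more than the paper's: you obtain the explicit identity $K^{(\mu)}(z,w)=\sum_{n\ge 0}\binom{n+k-1}{n}^{-1}\langle z,w\rangle^{n}$, from which $K^{(\mu)}\ll K$ follows by the Schur-product argument. The paper takes a different, less computational route. Rather than evaluating the sphere moments, it uses the symmetric Fock space realization $\mathscr{H}(K)=\bigoplus_{n}\mathscr{H}_{n}^{\mathrm{sym}}$: for $F=\sum_{j}c_{j}K(\cdot,z_{j})$ with expansion $F(z)=\sum_{n}\langle\xi_{n},z^{\otimes n}\rangle$, the orthogonality of the homogeneous components on $\partial B_{k}$ together with $\|b^{\otimes n}\|=1$ gives directly
\[
\int_{\partial B_{k}}|F(b)|^{2}\,d\mu(b)=\sum_{n}\int_{\partial B_{k}}|\langle\xi_{n},b^{\otimes n}\rangle|^{2}\,d\mu(b)\le\sum_{n}\|\xi_{n}\|_{\mathscr{H}_{n}^{\mathrm{sym}}}^{2}=\|F\|_{\mathscr{H}(K)}^{2},
\]
which is exactly the quadratic-form inequality defining $K^{(\mu)}\ll K$. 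So the paper needs only the degree orthogonality and a Cauchy--Schwarz bound, never the precise value of the moments, whereas you invoke the full moment formula $\int b^{\alpha}\bar b^{\beta}\,d\mu=\delta_{\alpha\beta}\,(k-1)!\,\alpha!/(|\alpha|+k-1)!$. Your approach is more elementary in that it does not presuppose the Fock-space description of $\mathscr{H}(K)$, and it delivers the sharper quantitative statement; the paper's approach is shorter once that description is in hand and makes the contractivity of $T_{B}$ transparent.
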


\begin{proof}
Define the Drury-Arveson kernel as in (\ref{eq:k18}), and set 
\begin{equation}
k_{z}^{\left(B\right)}\left(b\right)=\frac{1}{1-\left\langle z,b\right\rangle _{\mathbb{C}^{k}}},\quad b\in\partial B_{k},\;z\in B_{k}.\label{eq:k20}
\end{equation}
Note that $k_{z}^{\left(B\right)}\left(b\right)$ is well defined,
since 
\begin{align*}
\int_{B}\left|k_{z}^{\left(B\right)}\left(b\right)\right|^{2}d\mu\left(b\right) & =\sum_{n\geq0}\sum_{m\geq0}\int_{\partial B_{k}}\overline{\left\langle z,b\right\rangle _{\mathbb{C}^{k}}^{n}}\left\langle z,b\right\rangle _{\mathbb{C}^{k}}^{m}d\mu\left(b\right)\\
 & =\sum_{n\geq0}\sum_{m\geq0}\int_{\partial B_{k}}\overline{\left\langle z^{\otimes n},b^{\otimes n}\right\rangle }_{\mathscr{H}_{n}^{\text{sym}}}\left\langle z^{\otimes m},b^{\otimes m}\right\rangle _{\mathscr{H}_{n}^{\text{sym}}}d\mu\left(b\right)\\
 & =\sum_{n=0}^{\infty}\int_{\partial B_{k}}\left|\overline{\left\langle z^{\otimes n},b^{\otimes n}\right\rangle }_{\mathscr{H}_{n}^{\text{sym}}}\right|^{2}d\mu\left(b\right)\\
 & \leq\sum_{n=0}^{\infty}\left\Vert z^{\otimes n}\right\Vert _{\mathscr{H}_{n}^{\text{sym}}}^{2}=\frac{1}{1-\left\Vert z\right\Vert ^{2}}<\infty,\quad\text{as \ensuremath{\left\Vert z\right\Vert <1}. }
\end{align*}

In the DA-exmaple, we have $K^{\left(\mu\right)}\ll K$ where $K^{\left(\mu\right)}$
is defined as in (\ref{eq:k19}) (also see (\ref{eq:k5})) and $k^{\left(B\right)}$
as in (\ref{eq:k20}). Details:

For $c_{i}\in\mathbb{C}$, $z_{i}\in B_{k}$, $1\leq i\leq N$, we
have 
\begin{eqnarray*}
\sum_{i}\sum_{j}\overline{c}_{i}c_{j}K^{\left(\mu\right)}\left(z_{i},z_{j}\right) & = & \sum_{i}\sum_{j}\overline{c}_{i}c_{j}\int_{\partial B_{k}}\frac{1}{\overline{1-\left\langle z_{i},b\right\rangle _{\mathbb{C}_{k}}}}\frac{1}{1-\left\langle z_{j},b\right\rangle _{\mathbb{C}_{k}}}d\mu\left(b\right)\\
 & = & \int_{\partial B_{k}}\left|\sum\nolimits _{j}c_{j}\frac{1}{1-\left\langle z_{j},b\right\rangle _{\mathbb{C}_{k}}}\right|^{2}d\mu\left(b\right)\\
 & \leq & \left\Vert \sum\nolimits _{j}c_{j}\frac{1}{1-\left\langle \cdot,z_{j}\right\rangle _{\mathbb{C}^{k}}}\right\Vert _{\mathscr{H}\left(K\right)}^{2}\\
 & = & \sum_{i}\sum_{j}\overline{c}_{i}c_{j}\frac{1}{1-\left\langle z_{i},z_{j}\right\rangle _{\mathbb{C}^{k}}}\\
 & \underset{\text{see \ensuremath{\left(\ref{eq:k18}\right)}}}{=} & \sum_{i}\sum_{j}\overline{c}_{i}c_{j}K\left(z_{i},z_{j}\right).
\end{eqnarray*}
\end{proof}

\begin{cor}
\label{cor:B3} Let $K$ be the Drury-Arveson kernel, and let $T_{B}:\mathscr{H}\left(K\right)\rightarrow L^{2}\left(\partial B_{k},\mu\right)$
be the canonical contractive operator from (\ref{eq:B8}), then 
\[
Ran\left(T_{B}\right)\left(=T_{B}\left(\mathscr{H}\left(K\right)\right)\right)
\]
is dense in $L^{2}\left(\partial B_{k},\mu\right)$. 
\end{cor}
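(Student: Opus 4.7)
The plan is to invoke the standard duality criterion: the bounded operator $T_B$ has dense range in $L^2(\partial B_k, \mu)$ if and only if its adjoint $T_B^*$ is injective. From \thmref{A7}(\ref{enu:mu3}) the adjoint is given explicitly by
\[
(T_B^* \varphi)(z) \;=\; \int_{\partial B_k} \overline{k_z^{(B)}(b)}\, \varphi(b)\, d\mu(b), \qquad z \in B_k.
\]
It therefore suffices to prove: if $\varphi \in L^2(\partial B_k, \mu)$ satisfies $(T_B^*\varphi)(z) = 0$ for every $z \in B_k$, then $\varphi = 0$ almost everywhere with respect to $\mu$.

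The first step is a power-series expansion of the integrand. For $z \in B_k$ and $b \in \partial B_k$ one has $|\langle z, b\rangle_{\mathbb{C}^k}| \leq \|z\| < 1$, so the geometric series
\[
\overline{k_z^{(B)}(b)} \;=\; \sum_{n \geq 0}\, \overline{\langle z, b\rangle_{\mathbb{C}^k}}^{\,n}
\]
converges absolutely and uniformly in $b$ for each fixed $z$. Termwise integration against $\varphi$ is therefore legitimate, and the hypothesis $T_B^*\varphi \equiv 0$ becomes a power-series identity in $z$ on $B_k$. The identity theorem for holomorphic functions kills every Taylor coefficient; expanding each $\overline{\langle z, b\rangle_{\mathbb{C}^k}}^{\,n}$ by the multinomial theorem then forces the vanishing of an infinite family of moments of $\varphi$ against coordinate monomials of $b$ on $\partial B_k$.

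The second step is to promote this moment vanishing to $\varphi = 0$ in $L^2(\partial B_k, \mu)$. The monomials produced above span one half of the polynomial algebra on $\partial B_k$; to obtain the complementary (conjugate) half, I would exploit the $U_k$-invariance of $\mu = \lambda_k \circ \pi_k^{-1}$ from (\ref{eq:k10}), combined with the Peter--Weyl / spherical harmonic decomposition of $L^2(\partial B_k, \mu)$ into $U_k$-isotypic components indexed by bidegree $(p, q)$. Once the full conjugation-closed polynomial algebra is seen to annihilate $\varphi$, the Stone--Weierstrass theorem yields density of this algebra in $C(\partial B_k)$ and therefore in $L^2(\partial B_k, \mu)$, forcing $\varphi = 0$ and hence $\overline{\mathrm{Ran}(T_B)} = L^2(\partial B_k, \mu)$.

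The main obstacle sits in this bridging step: expanding $\overline{k_z^{(B)}}$ in $z$ only tests $\varphi$ directly against one half of the polynomial algebra on $\partial B_k$ (the holomorphic, or antiholomorphic, monomials depending on the sign convention for $\langle \cdot, \cdot\rangle_{\mathbb{C}^k}$). Closing the remaining gap requires genuine use of the compact homogeneous structure $\partial B_k \simeq U_k/U_{k-1}$ together with the uniqueness of $\mu$ as the normalized $U_k$-invariant probability measure in order to generate the complementary monomials, and this is where I expect the technical work of the proof to concentrate.
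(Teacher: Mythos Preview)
Your overall strategy coincides with the paper's: reduce to injectivity of $T_B^*$ via \thmref{A7}, expand the boundary kernel as a geometric series, and read off that $\int_{\partial B_k}\varphi(b)\,\langle z,b\rangle^{n}\,d\mu(b)=0$ for all $z\in B_k$ and all $n\geq 0$. The paper's proof is in fact terser than yours at the last step --- after obtaining these moment conditions it simply picks coordinates so that $\langle e_j,b\rangle=b_j$, records $\int \varphi(b)\,b_j^{n}\,d\mu=0$, and asserts $\varphi=0$. So the two arguments diverge only at the point you yourself flag as the obstacle.

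You are right that this is where the work sits, but your proposed bridge via $U_k$-invariance will not carry you across. The moment conditions test $\varphi$ only against the holomorphic monomials $b^{\alpha}$ on $\partial B_k$, and the linear $U_k$-action on $\mathbb{C}^k$ sends holomorphic polynomials to holomorphic polynomials of the same total degree; equivalently, in the spherical-harmonic decomposition of $L^{2}(\partial B_k,\mu)$ into bidegree-$(p,q)$ pieces, the $U_k$-action preserves each $(p,q)$ block. Rotating the family $\{b^{\alpha}\}$ therefore never produces any $\overline{b}^{\beta}$, the algebra you generate is not conjugation-closed, and Stone--Weierstrass does not apply. This is not a deficiency of your write-up relative to the paper --- the paper's one-line conclusion has exactly the same gap. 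Indeed, already for $k=1$ one sees the issue concretely: the range of $T_B$ is the boundary Hardy space $H^{2}(\mathbb{T})\subsetneq L^{2}(\mathbb{T},\mu)$, and the analogous strict containment $\overline{\mathrm{Ran}(T_B)}\subseteq H^{2}(\partial B_k)\subsetneq L^{2}(\partial B_k,\mu)$ persists for all $k$. The natural fix is to replace the codomain by the holomorphic Hardy space on the sphere; with that amendment the holomorphic-moment argument (yours and the paper's) suffices as written, and no antiholomorphic monomials are needed.
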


\begin{proof}
By \thmref{A7}, it is enough to prove that $Ker\left(T_{B}^{*}\right)=0$
where $T_{B}^{*}:L^{2}\left(\partial B_{k},\mu\right)\rightarrow\mathscr{H}\left(K\right)$
is the adjoint operator, i.e., 
\begin{equation}
\left(T_{B}^{*}\varphi\right)\left(z\right)=\int_{\partial B_{k}}\frac{1}{1-\left\langle z,b\right\rangle _{\mathbb{C}^{k}}}\varphi\left(b\right)d\mu\left(b\right),\quad\varphi\in L^{2}\left(\partial B_{k},\mu\right).
\end{equation}
But it follows from (\ref{eq:B6})-(\ref{eq:B8}) and \thmref{B2}
that: 
\begin{align*}
\left(T_{B}^{*}\varphi\right)\left(z\right) & =0,\quad\forall z\in B_{k}\\
 & \Updownarrow\\
\int_{\partial B_{k}}\varphi\left(b\right)\left\langle z,b\right\rangle ^{n}d\mu\left(b\right) & =0,\quad\forall z\in B_{k},\:\forall n\in\mathbb{N}_{0}.
\end{align*}

Now introduce a suitable ONB $\left\{ e_{j}\right\} $ in $\mathbb{C}^{k}$
s.t. $\left\langle e_{j},b\right\rangle _{\mathbb{C}^{k}}=b_{j}$.
Upon change of coordinates on $\partial B_{k}$, we get $\int\varphi\left(b_{1},\cdots,b_{k}\right)b_{j}^{n}db_{j}=0$,
$\forall n\in\mathbb{N}_{0}$, $1\leq j\leq k$. Hence $\varphi=0$
in $L^{2}\left(B_{k},\mu\right)$ as claimed. 
\end{proof}

\begin{cor}
\label{cor:D4}Let $K$ be the Drury-Arveson kernel on $B_{k}$; see
(\ref{eq:k18}). Let $\mu$ be the measure on $\partial B_{k}$ in
(\ref{eq:k10}), and let $K^{\left(\mu\right)}$ be the corresponding
p.d. kernel on $\partial B_{k}$; see (\ref{eq:k19}). Let $\mathscr{H}\left(K\right)$
and $\mathscr{H}(K^{\left(\mu\right)})$ be the corresponding RKHSs,
with respective inner products $\left\langle \cdot,\cdot\right\rangle _{\mathscr{H}\left(K\right)}$
and $\left\langle \cdot,\cdot\right\rangle _{\mathscr{H}(K^{\left(\mu\right)})}$. 

Then for every $f\in\mathscr{H}\left(K\right)$, and $r\in\left(0,1\right)$,
i.e., $0<r<1$, set 
\begin{equation}
f_{r}\left(b\right):=f\left(rb\right),\quad b\in\partial B_{k},\label{eq:B12}
\end{equation}
we conclude that $f_{r}\in\mathscr{H}(K^{\left(\mu\right)})$, and
\begin{equation}
\sup_{r<1}\left\Vert f_{r}\right\Vert _{\mathscr{H}(K^{\left(\mu\right)})}=\left\Vert f\right\Vert _{\mathscr{H}\left(K\right)}.\label{eq:B13}
\end{equation}
\end{cor}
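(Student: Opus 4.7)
The plan is to pass through the symmetric Fock expansion (\ref{eq:k7}) of $\mathscr{H}(K)$, exploit the $U(k)$-invariance of $\mu$ to decouple the homogeneous pieces, and then conclude by monotone convergence.

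Fix $f\in\mathscr{H}(K)$ with Fock expansion $f(z)=\sum_{n\geq 0}\langle \xi_n,z^{\otimes n}\rangle_{\mathscr{H}_n^{\text{sym}}}$, so that $\|f\|_{\mathscr{H}(K)}^{2}=\sum_{n}\|\xi_n\|^{2}$. For $r\in(0,1)$ and $b\in\partial B_k$, the definition (\ref{eq:B12}) yields
\[
f_r(b)=f(rb)=\sum_{n\geq 0}r^{n}\langle \xi_n,b^{\otimes n}\rangle,
\]
which converges absolutely and uniformly on $\partial B_k$ since $r<1$.

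Next I would compute $\|f_r\|_{\mathscr{H}(K^{(\mu)})}^{2}$. The key is orthogonality of the homogeneous pieces: under the $U(1)$-action $b\mapsto e^{i\theta}b$, which preserves $\mu$ by (\ref{eq:k10}), the $n$-th summand transforms by the character $e^{in\theta}$, so different degrees are $L^{2}(\mu)$-orthogonal and hence also $\mathscr{H}(K^{(\mu)})$-orthogonal (since $\mathscr{H}(K^{(\mu)})$ is built on $L^{2}(\mu)$ via (\ref{eq:k19})). On each fixed degree, the full group $U(k)$ acts irreducibly on $\mathscr{H}_n^{\text{sym}}$, so Schur's lemma forces the $\mathscr{H}(K^{(\mu)})$-norm on the $n$-th piece to be a positive scalar multiple of the Fock norm $\|\cdot\|_{\mathscr{H}_n^{\text{sym}}}$. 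Matching these scalars via the identification $K^{(\mu)}(x,y)=(T_B^{*}T_B K_y)(x)$ from \corref{T1} and the expansion $K(z,w)=\sum_{n}\langle z,w\rangle^{n}$ of (\ref{eq:k18}), I expect
\[
\|f_r\|_{\mathscr{H}(K^{(\mu)})}^{2}=\sum_{n\geq 0}r^{2n}\|\xi_n\|^{2}.
\]
In particular, each partial sum is bounded by $\sum_n\|\xi_n\|^{2}=\|f\|_{\mathscr{H}(K)}^{2}<\infty$, so $f_r\in\mathscr{H}(K^{(\mu)})$ for every $r<1$.

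Finally, the right-hand side is monotone increasing in $r$, and the monotone convergence theorem gives
\[
\sup_{r<1}\|f_r\|_{\mathscr{H}(K^{(\mu)})}^{2}=\sum_{n\geq 0}\|\xi_n\|^{2}=\|f\|_{\mathscr{H}(K)}^{2},
\]
which is (\ref{eq:B13}).

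The main obstacle is the Schur-lemma normalization step, i.e.\ verifying that the piece-wise scalars relating the $\mathscr{H}(K^{(\mu)})$-norm to the Fock norm are exactly $1$. This amounts to unpacking the sesquilinear conventions in (\ref{eq:k7}), (\ref{eq:k18}), and (\ref{eq:k19}) on a test element such as $\xi_n=e_{1}^{\otimes n}$, where everything is concretely computable; once this is done the remainder of the argument is routine.
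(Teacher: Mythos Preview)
Your approach---Fock expansion (\ref{eq:k7}), orthogonality of the homogeneous pieces, then monotone convergence in $r$---is exactly the paper's. The only visible difference is that the paper records the degree-$n$ contribution in the form $\int_{\partial B_{k}}\left|\langle\xi_{n},b^{\otimes n}\rangle\right|^{2}d\mu(b)$ rather than your $\|\xi_{n}\|^{2}$, and then, like you, defers the remaining normalization step to the phrase ``the desired conclusion follows.''
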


\begin{proof}[Proof (sketch)]
Let $f\in\mathscr{H}\left(K\right)$, then, by (\ref{eq:k7}), we
have 
\[
f_{r}\left(b\right)=\sum_{n=0}^{\infty}r^{n}\left\langle \xi_{n},b^{\otimes n}\right\rangle ,\quad\forall b\in\partial B_{k}
\]
with 
\[
\sum_{n=0}^{\infty}\left\Vert \xi_{n}\right\Vert _{\mathscr{H}_{n}^{sym}}^{2}=\left\Vert f\right\Vert _{\mathscr{H}\left(K\right)}^{2}.
\]
Hence 
\[
\left\Vert f_{r}\right\Vert _{\mathscr{H}(K^{\left(\mu\right)})}^{2}=\sum_{n=0}^{\infty}r^{2n}\int_{\partial B_{k}}\left|\left\langle \xi_{n},b^{\otimes n}\right\rangle \right|^{2}d\mu\left(b\right),
\]
and the desired conclusion follows.
\end{proof}
\begin{defn}
\label{def:D5}Let $K$ be a p.d. kernel on a set $X$, and let $\left(B,\mathscr{F}_{B},\mu,k^{\left(B\right)}\right)$
be a boundary space in the sense of (\ref{eq:k5}) in \defref{FS}.
We say that it is a \emph{topological boundary} iff (Def) there is
a one-parameter family of maps $s_{r}:B\rightarrow X$, $0<r<1$,
such that 
\begin{equation}
\sup_{r<1}\left\Vert f\circ s_{r}\right\Vert _{\mathscr{H}\left(K^{\left(\mu\right)}\right)}=\left\Vert f\right\Vert _{\mathscr{H}\left(K\right)}\label{eq:tb1}
\end{equation}
holds for all $f\in\mathscr{H}\left(K\right)$. 
\end{defn}

Note that (\ref{eq:B13}) in \corref{D4} shows that the Drury-Arveson
kernel (see \exaref{D1}) has a natural topological boundary. 

\subsection{Iterated Function Systems and Boundaries}

In some of our earlier work (see, e.g., \cite{MR3958137,MR3796641,MR3318644,MR2811284,MR1655831,MR1389918,MR1297015,MR1165867}),
we considered an example of a p.d. kernel on $\mathbb{D}\times\mathbb{D}$
where $\mathbb{D}$ is the disk in one complex dimension. In this
case, the boundary measure $\mu$ has its support on a Cantor subset
of the boundary circle $\mathbb{T}$ to $\mathbb{D}$. This is the
case when this boundary measure $\mu$ is the \emph{Cantor measure}
corresponding to scaling by 4, and omitting 2 of 4 subintervals in
each of the iteration steps which determine $\mu$. In this case,
we have explicit formulas for the corresponding decompositions. In
particular, $K^{\left(\mu\right)}=K$. See details below.
\begin{example}
Consider measures $\mu$ on $\mathbb{T}\simeq\left[0,1\right]$ for
which there is a subset $\Lambda\subset\mathbb{N}_{0}$ such that
$\left\{ e_{\lambda}\left(\theta\right)\mathrel{;}\lambda\in\Lambda\right\} $
is orthogonal in $L^{2}\left(\mathbb{T},\mu\right)$. We say that
$\left(\mu,\Lambda\right)$ is a \emph{spectral pair}. 

Especially, if $\mu$ is the Cantor IFS (iterated function system)
determined by $\frac{x}{4}$ and $\frac{x+2}{4}$ (see \figref{C4}),
we may take $\Lambda$ to be the following 
\begin{align*}
\Lambda_{4} & =\left\{ 0,1,4,5,16,17,20,21,64,65,\cdots\right\} \\
 & =\left\{ \sum\nolimits _{0}^{finite}b_{i}4^{i}\mid b_{i}\in\left\{ 0,1\right\} \right\} .
\end{align*}

We recall that the $\nicefrac{1}{4}$-Cantor measure $\mu$ (see Figure
\ref{fig:C4}) is the unique probability Borel measure on $\mathbb{R}$
satisfying 
\[
\int\varphi\,d\mu=\frac{1}{2}\left(\int\varphi\left(\frac{x}{4}\right)d\mu\left(x\right)+\int\varphi\left(\frac{x+2}{4}\right)d\mu\left(x\right)\right)
\]
for all Borel functions $\varphi$. It is also determined by its Fourier
transform
\begin{align*}
\widehat{\mu}\left(\xi\right) & =\int_{\mathbb{R}}e^{i2\pi\xi\cdot x}d\mu\left(x\right)=\int_{\mathbb{R}}e_{\xi}\left(x\right)d\mu\left(x\right)\\
 & =e_{\xi}\left(\frac{1}{3}\right)\prod_{n=0}^{\infty}\cos\left(\frac{\pi\xi/2}{4^{n}}\right),\quad\xi\in\mathbb{R}.
\end{align*}

In this case we have 
\begin{align*}
K_{\Lambda_{4}}\left(z,w\right) & =\sum_{\lambda\in\Lambda_{4}}\overline{w}^{\lambda}z^{\lambda}=\prod_{k=0}^{\infty}\left(1+\left(\overline{w}z\right)^{4^{k}}\right),\\
K_{\Lambda_{4}}^{*}\left(z,\theta\right) & =\lim_{r\rightarrow1}K_{\Lambda_{4}}\left(z,re\left(\theta\right)\right).
\end{align*}
Setting 
\[
K^{\left(\mu\right)}\left(z_{1},z_{2}\right):=\int_{0}^{1}K_{\Lambda_{4}}^{*}\left(z_{1},\theta\right)\overline{K_{\Lambda_{4}}^{*}\left(z_{2},\theta\right)}d\mu\left(\theta\right),\quad\forall\left(z_{1},z_{2}\right)\in\mathbb{D}\times\mathbb{D};
\]
then, it follows that
\[
K^{\left(\mu\right)}\left(z_{1},z_{2}\right)=K_{\Lambda_{4}}\left(z_{1},z_{2}\right).
\]
\end{example}

\begin{figure}
\includegraphics[width=0.4\textwidth]{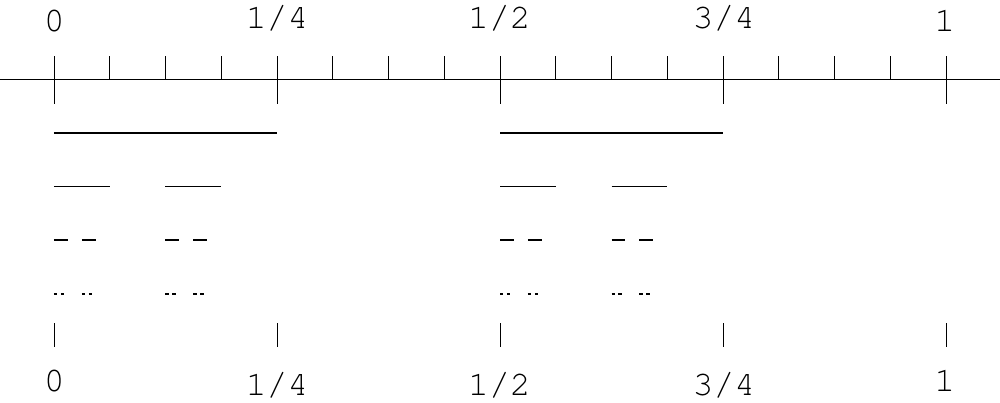}

\caption{\label{fig:C4}$\nicefrac{1}{4}$-Cantor set.}
\end{figure}

\section{\label{sec:gp}Gaussian Processes and Factorizations}

Here we discuss certain factorizations and boundary representations
which arise from a class of Gaussian processes and their generalized
Ito integrals.

In \secref{Intro} we considered positive definite (p.d.) functions
defined on $X\times X$ where $X$ was a general set. Below we study
certain p.d. functions which arise naturally when the set $X$ is
taken to be a prescribed sigma-algebra. In this setting, we shall
give an explicit formula for the corresponding reproducing kernel
Hilbert spaces (RKHSs). This particular family of p.d. functions and
associated RKHSs serve as useful tools in the context of \emph{Gaussian
processes}. 

Let $\left(B,\mathscr{F}_{B},\mu\right)$ be a measure space with
$\mu$ sigma-finite, and let $W^{\left(\mu\right)}$ be the corresponding
generalized Wiener processes (real valued), i.e., given by the three
conditions

(i) $W_{A}^{\left(\mu\right)}$, $A\in\mathscr{F}_{B}$ is Gaussian;

(ii) $\mathbb{E}(W_{A}^{\left(\mu\right)})=0$; 

(iii) the covariance formula holds,
\begin{equation}
\mathbb{E}\left(W_{A_{1}}^{\left(\mu\right)}W_{A_{2}}^{\left(\mu\right)}\right)=\mu\left(A_{1}\cap A_{2}\right),\quad\forall A_{1},A_{2}\in\mathscr{F}_{B}.
\end{equation}
For $k\in L^{2}\left(B,\mu\right)$, we then define the Ito-integral
\begin{equation}
V_{k}:=\int_{B}k\left(b\right)dW_{b}^{\left(\mu\right)},
\end{equation}
and we have 
\begin{equation}
\mathbb{E}\left(\overline{V_{k_{1}}}V_{k_{2}}\right)=\int_{B}\overline{k_{1}\left(b\right)}k_{2}\left(b\right)d\mu\left(b\right).
\end{equation}

In particular, if $k_{x}^{\left(B\right)}\left(b\right)=k^{\left(B\right)}\left(x,b\right):X\times B\rightarrow\mathbb{C}$
is such that $k^{\left(B\right)}\left(x,\cdot\right)\in L^{2}\left(B,\mu\right)$,
$\forall x\in X$, then 
\begin{equation}
\mathbb{E}\left(\overline{V_{k_{x}^{\left(B\right)}}}V_{k_{y}^{\left(B\right)}}\right)=\int_{B}\overline{k^{\left(B\right)}\left(x,b\right)}k^{\left(B\right)}\left(y,b\right)d\mu\left(b\right)=K^{\left(\mu\right)}\left(x,y\right),
\end{equation}
which is a p.d. kernel with contractive factorization. 

In general, if $V$ is a Gaussian process on $X$, and if $\left(B,\mathscr{F}_{B},\mu\right)$
is given, it is of interest to decide whether there exists $f\in L^{2}\left(\mu\right)$
such that 
\begin{equation}
V=\int_{B}f\left(b\right)dW_{b}^{\left(\mu\right)}.\label{eq:k21}
\end{equation}
We may define $V\ll W^{\left(\mu\right)}$ when $V$ has such a representation
(\ref{eq:k21}).
\begin{thm}
Let $\left(B,\mathscr{F}_{B},\mu\right)$ be a sigma-finite measure
space, and on $\mathscr{F}_{B}$ define the following kernel 
\begin{equation}
K\left(A_{1},A_{2}\right):=\mu\left(A_{1}\cap A_{2}\right),\quad A_{i}\in\mathscr{F}_{B}.\label{eq:k17}
\end{equation}
Then the reproducing kernel Hilbert space (RKHS) $\mathscr{H}$ of
this p.d. kernel is as follows:

The elements $F$ in $\mathscr{H}$ are signed measures on $\mathscr{F}_{B}$
which satisfy: 
\begin{equation}
F\ll\mu,\label{eq:km1}
\end{equation}
absolute continuity, with Radon-Nikodym derivative
\begin{equation}
\frac{dF}{d\mu}\in L^{2}\left(B,\mu\right).
\end{equation}
For the RKHS norm from $\mathscr{H}$, we have 
\begin{equation}
\left\Vert F\right\Vert _{\mathscr{H}}^{2}=\int_{B}\left|\frac{dF}{d\mu}\right|^{2}d\mu.\label{eq:km2}
\end{equation}
\end{thm}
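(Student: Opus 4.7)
The plan is to realize $\mathscr{H}$ concretely as $L^{2}(B,\mu)$ via the natural \emph{characteristic-function feature map}. The starting observation is the identity
\begin{equation*}
  K(A_{1},A_{2}) \;=\; \mu(A_{1}\cap A_{2}) \;=\; \int_{B}\overline{\chi_{A_{1}}(b)}\,\chi_{A_{2}}(b)\,d\mu(b) \;=\; \bigl\langle \chi_{A_{1}},\chi_{A_{2}}\bigr\rangle_{L^{2}(\mu)},
\end{equation*}
which is precisely a factorization of the form (\ref{eq:k4}) with $k_{A}^{(B)}:=\chi_{A}$ (tacitly restricting the index set to $\{A\in\mathscr{F}_{B}:\mu(A)<\infty\}$, since otherwise $K(A,A)=\infty$).

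First I would invoke \lemref{TB} and \thmref{A7} applied to this choice of $k^{(B)}$. Because the factorization reproduces $K$ exactly here, i.e.\ $K^{(\mu)}=K$, the canonical operator $T_{B}:\mathscr{H}\to L^{2}(B,\mu)$ sending $K(\cdot,A)\mapsto\chi_{A}$ is actually \emph{isometric}, not merely contractive. Its range contains $\mathrm{span}\{\chi_{A}\}$, and sigma-finiteness of $\mu$ ensures that simple functions are dense in $L^{2}(B,\mu)$; thus $T_{B}$ is a unitary isomorphism $\mathscr{H}\xrightarrow{\sim}L^{2}(B,\mu)$.

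Next, I would use the reproducing property together with this unitary to identify each $F\in\mathscr{H}$ as a complex measure with Radon--Nikodym density $g:=T_{B}F\in L^{2}(B,\mu)$. Concretely, for each $A$ with $\mu(A)<\infty$,
\begin{equation*}
  F(A) \;=\; \bigl\langle K(\cdot,A),F\bigr\rangle_{\mathscr{H}} \;=\; \bigl\langle T_{B}K(\cdot,A),T_{B}F\bigr\rangle_{L^{2}(\mu)} \;=\; \bigl\langle\chi_{A},g\bigr\rangle_{L^{2}(\mu)} \;=\; \int_{A}g\,d\mu,
\end{equation*}
from which (\ref{eq:km1}) and $dF/d\mu=g$ follow; the norm identity (\ref{eq:km2}) is then immediate from $\|F\|_{\mathscr{H}}^{2}=\|T_{B}F\|_{L^{2}(\mu)}^{2}$. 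Conversely, given a complex measure $F\ll\mu$ with $h:=dF/d\mu\in L^{2}(\mu)$, the Cauchy--Schwarz bound
\begin{equation*}
  \Bigl|\sum\nolimits_{i}c_{i}F(A_{i})\Bigr|^{2} \;=\; \Bigl|\bigl\langle \sum\nolimits_{i}c_{i}\chi_{A_{i}},h\bigr\rangle_{L^{2}(\mu)}\Bigr|^{2} \;\leq\; \|h\|_{L^{2}(\mu)}^{2}\sum\nolimits_{i,j}\overline{c}_{i}c_{j}\,\mu(A_{i}\cap A_{j})
\end{equation*}
together with \lemref{A1} confirms $F\in\mathscr{H}$ with $\|F\|_{\mathscr{H}}\leq\|h\|_{L^{2}(\mu)}$, matching the first half of the argument.

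The main delicate point --- and likely the main obstacle to a fully clean write-up --- is the verification that an abstract element of $\mathscr{H}$, which a priori is only a function on $\mathscr{F}_{B}$, is actually countably additive (hence a genuine complex measure). This is not automatic from the RKHS construction, but falls out once one identifies the function with $A\mapsto\int_{A}g\,d\mu$ via the unitary $T_{B}$: countable additivity on finite-measure sets is standard (dominated convergence applied to $g\in L^{2}(\mu)\subset L^{1}_{\mathrm{loc}}(\mu)$), and sigma-finiteness permits exhaustion of general $A$ by an increasing union of finite-measure sets. Everything else is routine bookkeeping.
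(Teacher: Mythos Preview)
Your proof is correct. The paper's argument and yours share the same core identification --- $K(\cdot,A)\leftrightarrow\chi_{A}$ together with the density of simple functions in $L^{2}(\mu)$ --- but are organized in opposite directions. The paper takes the ``guess and verify'' route: it \emph{starts} from the candidate Hilbert space of absolutely continuous measures with $L^{2}$ density, checks that $F_{A}(\cdot)=\mu(A\cap\cdot)$ lies in it, and verifies the reproducing identity $\langle F_{A},G\rangle_{\mathscr{H}}=\int_{A}(dG/d\mu)\,d\mu=G(A)$ directly; uniqueness of the RKHS then finishes the argument in three lines. You instead work from the abstract $\mathscr{H}$ outward, invoking the paper's own operator framework (\lemref{TB}, \thmref{A7}) to build the unitary $T_{B}$ and then read off the measure structure via the reproducing property and \lemref{A1}. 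Your route is longer but has the virtue of exercising the machinery the paper develops; the paper's proof bypasses that machinery entirely. Your remark on countable additivity is a point the paper leaves implicit.
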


\begin{proof}
It is clear that the conditions (\ref{eq:km1})--(\ref{eq:km2})
define a Hilbert space. Since the space spanned by the indicator functions
\begin{equation}
\left\{ \chi_{A}\mid A\in\mathscr{F}_{B},\:\mu\left(A\right)<\infty\right\} 
\end{equation}
is dense in $L^{2}\left(B,\mu\right)$, we need only verify that conditions
(\ref{enu:rk1}) and (\ref{enu:rk2}) from (\ref{eq:A1}) are satisfied.
Condition (\ref{enu:rk1}) is clear since if $A\in\mathscr{F}_{B}$
and $\mu\left(A\right)<\infty$, then $F_{A}\left(\cdot\right)=\mu\left(A\cap\cdot\right)$
is in $\mathscr{H}$, and, for all $G\in\mathscr{H}$, we then have:
\[
\left\langle F_{A},G\right\rangle _{\mathscr{H}}=\int_{B}\chi_{A}\left(\frac{dG}{d\mu}\right)d\mu=\int_{A}\frac{dG}{d\mu}d\mu=G\left(A\right),
\]
which is the desired conclusion (\ref{enu:rk2}).
\end{proof}

\section{Applications to Machine Learning}

One of the more recent applications of kernels and the associated
reproducing kernel Hilbert spaces (RKHS) is to optimization, also
called kernel-optimization. See \cite{MR3803845,MR2933765}. In the
context of machine learning, it refers to training-data and feature
spaces. In the context of numerical analysis, a popular version of
the method is used to produce splines from sample points; and to create
best spline-fits. In statistics, there are analogous optimization
problems going by the names \textquotedblleft least-square fitting,\textquotedblright{}
and \textquotedblleft maximum-likelihood\textquotedblright{} estimation.
In the latter instance, the object to be determined is a suitable
probability distribution which makes \textquotedblleft most likely\textquotedblright{}
the occurrence of some data which arises from experiments, or from
testing. 

A major theme in machine learning is inference from finite samples
in high dimensional spaces. In general, there are two types of learning
problems: (i) supervised learning (e.g., classification, regression),
and (ii) un-supervised learning (e.g., clustering). We shall consider
(i) in this section.

What these methods have in common is a minimization (or a max problem)
involving a \textquotedblleft quadratic\textquotedblright{} expression
$Q$ with two terms. The first in $Q$ measures a suitable $L^{2}\left(\mu\right)$-square
applied to a difference of a measurement and a \textquotedblleft best
fit.\textquotedblright{} The latter will then to be chosen from anyone
of a number of suitable reproducing kernel Hilbert spaces (RKHS).
The choice of kernel and RKHS will serve to select desirable features.
So we will minimize a quantity $Q$ which is the sum of two terms
as follows: (i) a $L^{2}$-square applied to a difference, and (ii)
a penalty term which is a RKHS norm-squared. (See eq. (\ref{eq:ao2}).)
In the application to determination of splines, the penalty term may
be a suitable Sobolev normed-square; i.e., $L^{2}$ norm-squared applied
to a chosen number of derivatives. Hence non-differentiable choices
will be \textquotedblleft penalized.\textquotedblright{} 

In all of the cases, discussed above, there will be a good choice
of (i) and (ii), and we show that there is then an explicit formula
for the optimal solution; see eq (\ref{eq:ao5}) in \thmref{opF}
below.

Let $X$ be a set, and let $K:X\times X\longrightarrow\mathbb{C}$
be a positive definite (p.d.) kernel. Let $\mathscr{H}\left(K\right)$
be the corresponding reproducing kernel Hilbert space (RKHS). Let
$\mathscr{B}$ be a sigma-algebra of subsets of $X$, and let $\mu$
be a positive measure on the corresponding measure space $\left(X,\mathscr{B}\right)$.
We assume that $\mu$ is sigma-finite. We shall further assume that
the associated operator $T$ given by 
\begin{equation}
\mathscr{H}\left(K\right)\ni f\xrightarrow{\;T\;}\left(f\left(x\right)\right)_{x\in X}\in L^{2}\left(\mu\right)\label{eq:ao1}
\end{equation}
is densely defined and closable.

Fix $\beta>0$, and $\psi\in L^{2}\left(\mu\right)$, and set 
\begin{equation}
Q_{\psi,\beta}\left(f\right)=\left\Vert \psi-Tf\right\Vert _{L^{2}\left(\mu\right)}^{2}+\beta\left\Vert f\right\Vert _{\mathscr{H}\left(K\right)}^{2}\label{eq:ao2}
\end{equation}
defined for $f\in\mathscr{H}\left(K\right)$ , or in the dense subspace
$dom\left(T\right)$ where $T$ is the operator in (\ref{eq:ao1}).
Let 
\begin{equation}
L^{2}\left(\mu\right)\xrightarrow{\;T^{*}\;}\mathscr{H}\left(K\right)\label{eq:ao3}
\end{equation}
be the corresponding adjoint operator, i.e., 
\begin{equation}
\left\langle F,T^{*}\psi\right\rangle _{\mathscr{H}\left(K\right)}=\left\langle Tf,\psi\right\rangle _{L^{2}\left(\mu\right)}=\int_{X}\overline{f\left(s\right)}\psi\left(s\right)d\mu\left(s\right).\label{eq:ao4}
\end{equation}

\begin{thm}
\label{thm:opF}Let $K$, $\mu$, $\psi$, $\beta$ be as specified
above; then the optimization problem 
\[
\inf_{f\in\mathscr{H}\left(K\right)}Q_{\psi,\beta}\left(f\right)
\]
has a unique solution $F$ in $\mathscr{H}\left(K\right)$, it is
\begin{equation}
F=\left(\beta I+T^{*}T\right)^{-1}T^{*}\psi\label{eq:ao5}
\end{equation}
where the operators $T$ and $T^{*}$ are as specified in (\ref{eq:ao1})-(\ref{eq:ao4}).
\end{thm}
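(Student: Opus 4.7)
The plan is to treat this as a standard Tikhonov (ridge) regularization problem and recognize that $Q_{\psi,\beta}$ is a strictly convex quadratic functional on (a dense domain of) $\mathscr{H}(K)$ whose unique critical point is characterized by the normal equation $(\beta I + T^{*}T)F = T^{*}\psi$. Since $T$ is densely defined and closable, standard operator theory gives that $\overline{T}^{*}\overline{T}$ is positive and self-adjoint, so $\beta I + \overline{T}^{*}\overline{T}$ is invertible with bounded inverse of norm at most $\beta^{-1}$, and the formula \eqref{eq:ao5} makes sense on all of $L^{2}(\mu)$ with range in $\mathrm{dom}(\overline{T}^{*}\overline{T}) \subset \mathscr{H}(K)$.

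The first step is to justify that without loss of generality we may replace $T$ by its closure, because $Q_{\psi,\beta}$ extends by continuity to $\mathrm{dom}(\overline{T})$, and minimization over the graph-norm-dense subspace coincides with minimization over the closure. Next, I would expand, for arbitrary $F \in \mathrm{dom}(\overline{T})$ and $h \in \mathrm{dom}(\overline{T})$,
\begin{align*}
Q_{\psi,\beta}(F+h) - Q_{\psi,\beta}(F)
&= -2\,\mathrm{Re}\bigl\langle \psi - \overline{T}F,\,\overline{T}h\bigr\rangle_{L^{2}(\mu)} + \|\overline{T}h\|^{2}_{L^{2}(\mu)} \\
&\quad + 2\beta\,\mathrm{Re}\bigl\langle F,h\bigr\rangle_{\mathscr{H}(K)} + \beta\|h\|^{2}_{\mathscr{H}(K)}.
\end{align*}
Setting the linear-in-$h$ part to vanish for all admissible $h$ yields
\[
\bigl\langle \overline{T}^{*}(\overline{T}F - \psi) + \beta F,\, h\bigr\rangle_{\mathscr{H}(K)} = 0,
\]
which forces $(\beta I + \overline{T}^{*}\overline{T})F = \overline{T}^{*}\psi$; hence $F = (\beta I + \overline{T}^{*}\overline{T})^{-1}\overline{T}^{*}\psi$. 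The remaining quadratic part $\|\overline{T}h\|^{2} + \beta\|h\|^{2} \geq \beta\|h\|^{2}$ is strictly positive for $h \neq 0$, so this critical point is in fact the unique global minimum.

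I expect the main technical nuisance, rather than a deep obstacle, to be the domain bookkeeping: verifying that $\overline{T}^{*}\psi$ is indeed in the range of $\beta I + \overline{T}^{*}\overline{T}$, and that $F$ lies in $\mathrm{dom}(\overline{T}^{*}\overline{T}) \subseteq \mathrm{dom}(\overline{T})$ so that the algebra above is legitimate. Both follow from the functional calculus for the positive self-adjoint operator $A := \overline{T}^{*}\overline{T}$: the operator $(\beta I + A)^{-1}$ is bounded with $\|(\beta I + A)^{-1}\| \leq \beta^{-1}$, it maps $\mathscr{H}(K)$ into $\mathrm{dom}(A)$, and $A(\beta I + A)^{-1} = I - \beta(\beta I + A)^{-1}$ is bounded as well.

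An alternative, slicker route is to complete the square: writing $A = \overline{T}^{*}\overline{T}$ and $F_{0} := (\beta I + A)^{-1}\overline{T}^{*}\psi$, one verifies directly that
\[
Q_{\psi,\beta}(f) = \bigl\langle (\beta I + A)(f - F_{0}),\,f - F_{0}\bigr\rangle_{\mathscr{H}(K)} + \|\psi\|^{2}_{L^{2}(\mu)} - \bigl\langle \overline{T}^{*}\psi,\,F_{0}\bigr\rangle_{\mathscr{H}(K)},
\]
which manifestly attains its minimum precisely at $f = F_{0}$, giving both existence and uniqueness at once, together with the value of the infimum as a bonus.
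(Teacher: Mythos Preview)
Your proposal is correct and follows essentially the same approach as the paper: take a first-order variation $f_{\varepsilon}=F+\varepsilon h$, set the linear term to zero to obtain the normal equation $(\beta I + T^{*}T)F = T^{*}\psi$, and read off the minimizer. The paper's proof is a bare sketch that stops at the Euler--Lagrange equation, whereas you go further by handling the closure of $T$, invoking the functional calculus for $\overline{T}^{*}\overline{T}$ to justify invertibility of $\beta I + \overline{T}^{*}\overline{T}$, and establishing uniqueness via strict convexity (and your complete-the-square alternative); these additions fill in points the paper leaves implicit.
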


\begin{proof}
(Sketch) We fix $F$, and assign $f_{\varepsilon}:=F+\varepsilon h$
where $h$ varies in the dense domain $dom\left(T\right)$ from (\ref{eq:ao1}).
For the derivative $\frac{d}{d\varepsilon}\big|_{\varepsilon=0}$
we then have: 
\[
\frac{d}{d\varepsilon}\big|_{\varepsilon=0}Q_{\psi,\beta}\left(f_{\varepsilon}\right)=2\Re\left\langle h,\left(\beta I+T^{*}T\right)F-T^{*}\psi\right\rangle _{\mathscr{H}\left(K\right)}=0
\]
for all $h$ in a dense subspace in $\mathscr{H}\left(K\right)$.
The desired conclusion follows.
\end{proof}

\subsection{Manifold Learning}

A more recent application is the so-called \emph{manifold learning}.
The basic idea is that the input data often lives in a submanifold
(nonlinear) of a much higher dimensional space, and the question is
how to extract and encode such information. (See \figref{sr} as a
standard illustration.) This extends the classical principal component
analysis (PCA), which is widely used to extract linear subspaces.
There is a collection of algorithms in manifold learning, and most
of these are based on the ``\emph{kernel trick}'', which itself
dates back at least to \cite{MR2418654}; see also \cite{MR1864085,MR2186447,MR2327597,MR3450534,MR3560092}.
The kernel trick refers to mapping the input data $X$ into a higher
(usually infinite) dimensional Hilbert space $\mathscr{H}$, called
a \emph{feature space}, and then apply standard learning algorithms
there. \figref{svm} illustrates this point of view using examples
of kernel \emph{support vector machine} (K-SVM). It shows a binary
classification problem, where the two classes are not linearly separable
in the input space, but by using a Gaussian kernel, the two classes
are linearly separated in the associated Gaussian RKHS.

\begin{figure}[H]
\includegraphics[width=0.35\columnwidth]{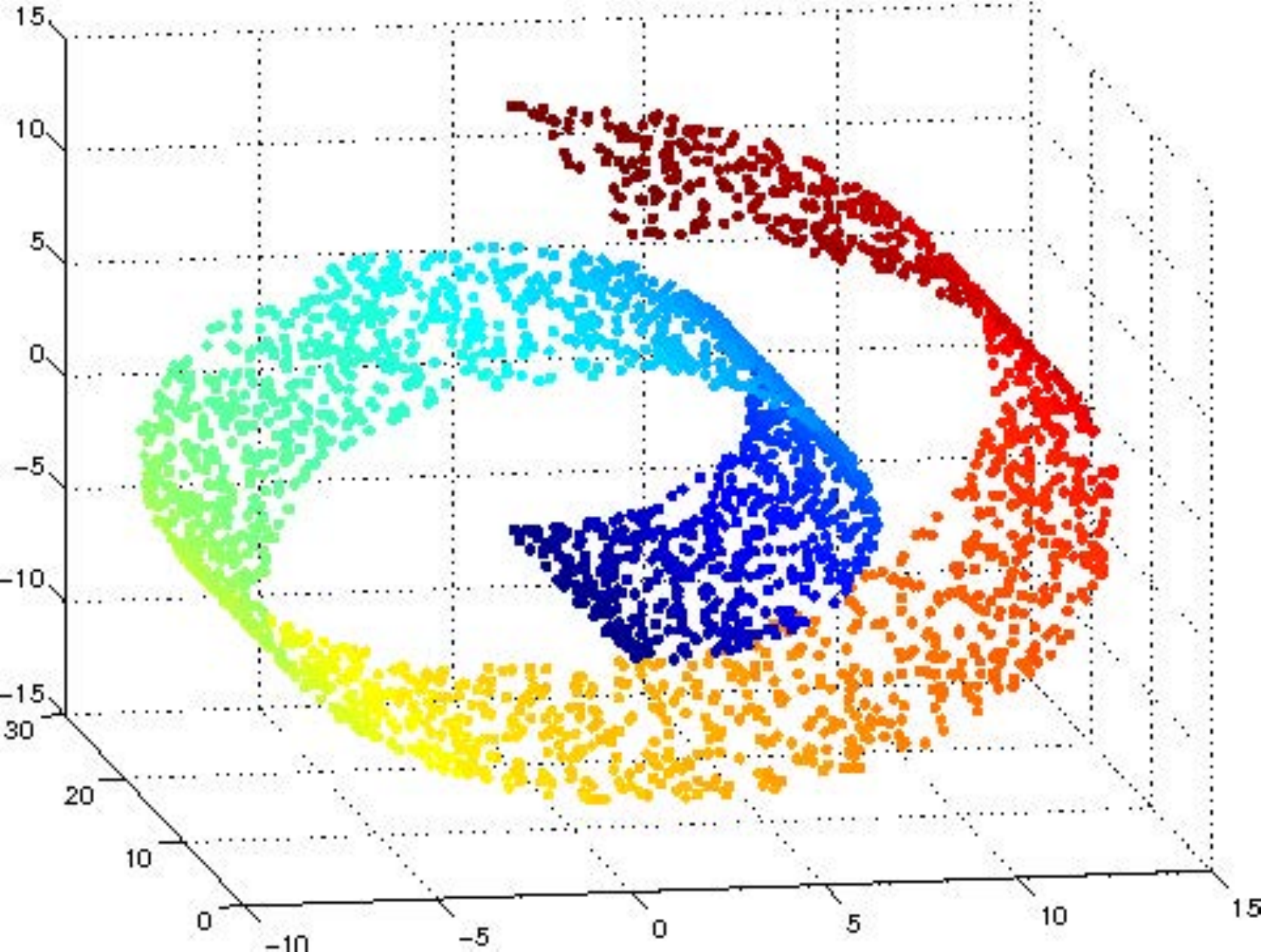}

\caption{\label{fig:sr}A Swiss role.}
\end{figure}

\begin{figure}[H]
\subfloat[Binary classification. No linear decision boundary.]{\includegraphics[width=0.45\columnwidth]{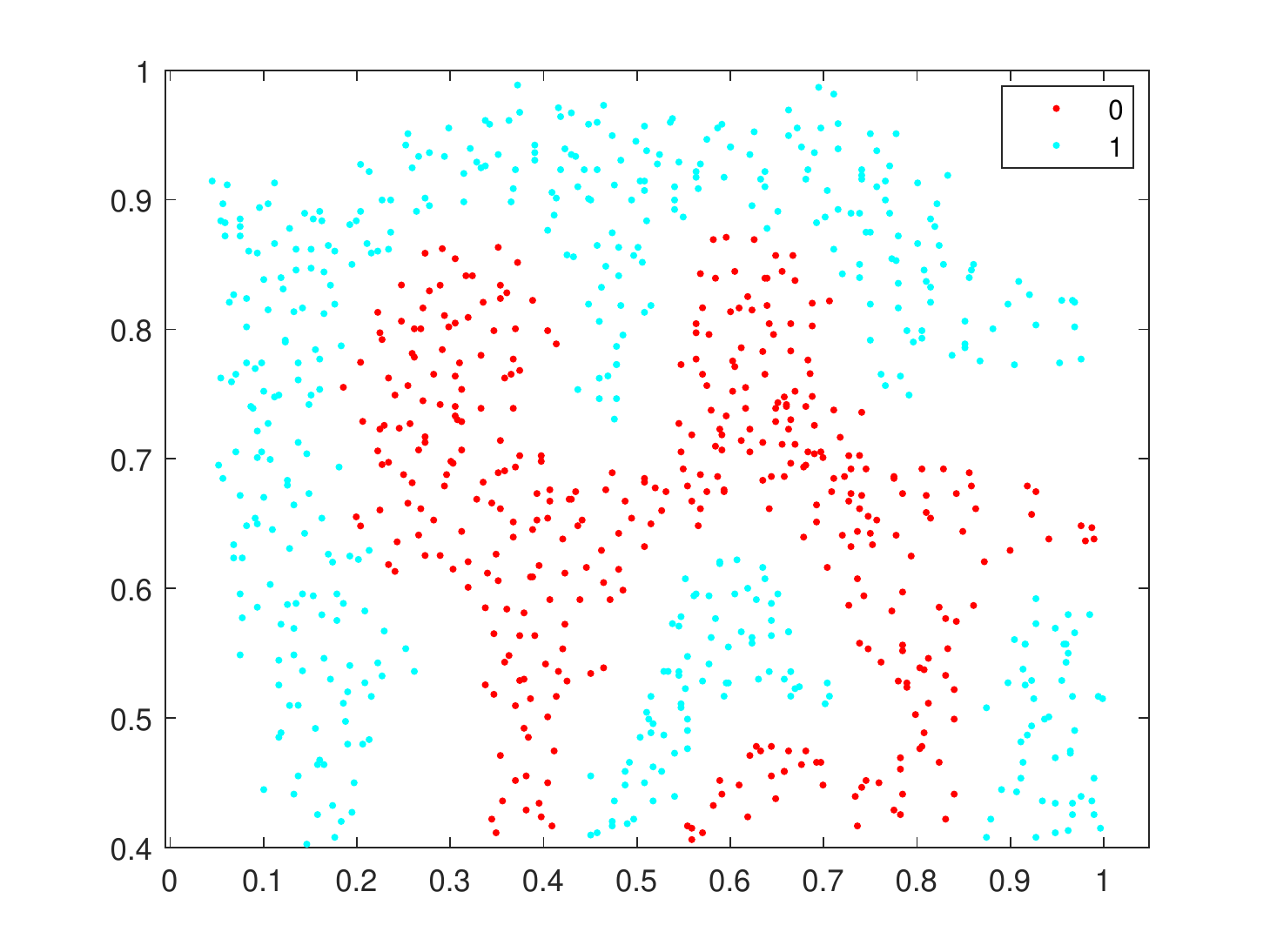}

}\hfill{}\subfloat[Linear boundary via Gaussian kernel.]{\includegraphics[width=0.45\columnwidth]{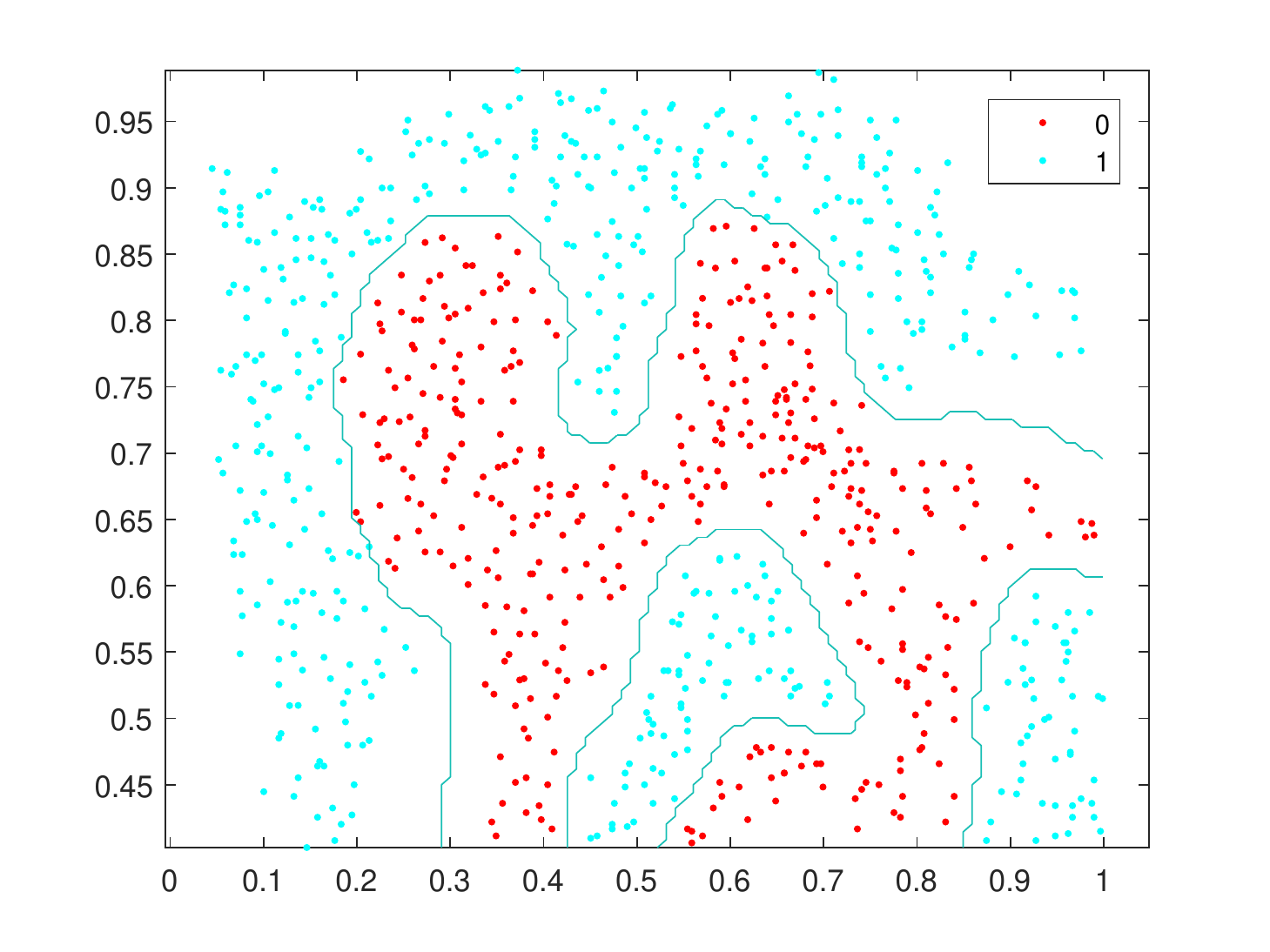}

}

\caption{\label{fig:svm}SVM using Gaussian kernel.}
\end{figure}

This idea sounds contra-intuitive as the higher dimensional spaces
are usually much more complicated; but the key is that the inner product
in $\mathscr{H}$ can be carried out via the kernel $K$. That is,
one chooses a \emph{feature map} $\pi:X\rightarrow\mathscr{H}$ such
that 
\begin{equation}
K\left(x_{i},x_{j}\right)=\left\langle \pi\left(x_{i}\right),\pi\left(x_{j}\right)\right\rangle _{\mathscr{H}}.
\end{equation}

In view of previous sections, we see the kernel trick amounts to certain
factorizations of p.d. kernels. It is well known that there are many
choices of the feature space $\mathscr{H}$. A canonical choice is
the RKHS $\mathscr{H}\left(K\right)$ of the p.d. kernel as in \secref{Intro},
or by general theory $K$ may be realized as the covariance of a Gaussian
process, see e.g., \secref{gp}. For the connections between these
two realizations, we have:
\begin{thm}[\cite{MR4020693}]
 The following are equivalent:
\begin{itemize}
\item Factorization of $K$:\textbf{ }There exists a measure space $\left(M,\mathscr{F},\mu\right)$,
and $\left\{ k_{x}\right\} _{x\in X}\subset L^{2}\left(M,\mu\right)$,
s.t. 
\[
K\left(x,y\right)=\int_{M}\overline{k_{x}\left(s\right)}k_{y}\left(s\right)d\mu\left(s\right).
\]
\item Disintegration of $V_{x}$: There exists $\left(M,\mathscr{F},\mu\right)$,
and $\left\{ k_{x}\right\} _{x\in X}\subset L^{2}\left(M,\mu\right)$,
s.t.
\[
V_{x}=\int_{\Omega}k_{x}\left(s\right)dW_{s}^{\left(\mu\right)}.
\]
Here, $\left\{ V_{x}\right\} _{x\in X}$ is the mean zero Gaussian
process, satisfying $\mathbb{E}(\overline{V}_{x}V_{y})=K\left(x,y\right)$.
\end{itemize}
\end{thm}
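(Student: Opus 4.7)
The plan is to exploit the It\^o isometry for the generalized Wiener process $W^{(\mu)}$ introduced in \secref{gp}, which makes the two implications essentially a single computation read in opposite orders. First I would set the scene: for any sigma-finite $(M,\mathscr{F},\mu)$ the process $W^{(\mu)}$ is the mean-zero Gaussian field indexed by $\mathscr{F}$ with $\mathbb{E}(W^{(\mu)}_A W^{(\mu)}_B) = \mu(A\cap B)$, and for every $k\in L^{2}(M,\mu)$ the It\^o integral $V_{k} := \int_{M} k(s)\,dW_{s}^{(\mu)}$ is a centered Gaussian variable satisfying the isometry
\begin{equation}
\mathbb{E}\bigl(\overline{V_{k_{1}}}\,V_{k_{2}}\bigr) \;=\; \int_{M}\overline{k_{1}(s)}\,k_{2}(s)\,d\mu(s),\qquad k_{1},k_{2}\in L^{2}(M,\mu),\label{eq:plan-ito}
\end{equation}
which is displayed at the start of \secref{gp}.

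For the direction (1)$\Rightarrow$(2), I assume the factorization $K(x,y)=\int_{M}\overline{k_{x}(s)}\,k_{y}(s)\,d\mu(s)$ with $\{k_{x}\}_{x\in X}\subset L^{2}(M,\mu)$. I realize $W^{(\mu)}$ on an auxiliary probability space $(\Omega,\mathscr{F},\mathbb{P})$, with existence obtained from Kolmogorov's consistency theorem exactly as in the Gaussian-process theorem of Section~2. Then I set $V_{x}:=\int_{M} k_{x}(s)\,dW_{s}^{(\mu)}$. Since It\^o integrals of deterministic $L^{2}$-integrands are jointly Gaussian, $\{V_{x}\}_{x\in X}$ is a mean-zero Gaussian process, and applying \eqref{eq:plan-ito} to $k_{1}=k_{x}$, $k_{2}=k_{y}$ yields $\mathbb{E}(\overline{V}_{x}V_{y})=K(x,y)$, as required.

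For the converse (2)$\Rightarrow$(1), I assume the disintegration $V_{x}=\int_{M}k_{x}(s)\,dW_{s}^{(\mu)}$ with $\{k_{x}\}\subset L^{2}(M,\mu)$ and $\mathbb{E}(\overline{V}_{x}V_{y})=K(x,y)$. A single application of \eqref{eq:plan-ito} then gives
\begin{equation*}
K(x,y) \;=\; \mathbb{E}\bigl(\overline{V}_{x}V_{y}\bigr) \;=\; \int_{M}\overline{k_{x}(s)}\,k_{y}(s)\,d\mu(s),
\end{equation*}
which is precisely the factorization in~(1); the same triple $(M,\mathscr{F},\mu)$ and the same family $\{k_{x}\}$ serve both statements, so the equivalence is witnessed by the same data on the two sides.

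The main (and really only) ingredient is the It\^o isometry, so I do not foresee a genuine obstacle. The one point demanding a little care is the construction of $W^{(\mu)}$ on the prescribed $(M,\mathscr{F},\mu)$ in the forward direction, but this is a direct adaptation of the Kolmogorov-extension argument already used for the canonical Gaussian process of a p.d.\ kernel in Section~2, and the passage from the cylinder-set construction to the It\^o integral of a general $L^{2}$-integrand proceeds by the standard density/isometry argument.
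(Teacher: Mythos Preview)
Your proposal is correct and follows essentially the same approach as the paper's own proof sketch: both directions are obtained directly from the It\^o isometry \eqref{eq:plan-ito}, with the forward direction constructing $V_x$ as an It\^o integral against $W^{(\mu)}$ and the converse reading off the factorization from the covariance. Your write-up is in fact somewhat more detailed than the paper's sketch (you make explicit the Kolmogorov-extension step for $W^{(\mu)}$), but the underlying argument is the same.
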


\begin{proof}
We only sketch the main ideas, and the reader is referred to \cite{MR4020693}
for more details.

If $K\left(x,y\right)=\int_{M}\overline{k_{x}\left(s\right)}k_{y}\left(s\right)d\mu\left(s\right)$,
set 
\[
V^{\left(\mu\right)}:=\int_{M}k_{x}\left(s\right)dW_{s}^{\left(\mu\right)}.
\]
Then $V^{\left(\mu\right)}\in L^{2}\left(\Omega,\mathbb{P}^{\left(\mu\right)}\right)$,
and $V\cong V^{\left(\mu\right)}$.

Conversely, assume $\left\{ V_{x}\right\} _{x\in X}$ admits a disintegration
$V_{x}=\int_{M}k_{x}\left(s\right)dW_{s}^{\left(\mu\right)}$. Then,
by Ito-isometry,
\[
\mathbb{E}\left[\overline{V_{x}}V_{y}\right]=\int_{M}\overline{k_{x}\left(s\right)}k_{y}\left(s\right)d\mu.
\]
\end{proof}

\subsection{Conclusions}

We emphasize here feature spaces that are $L^{2}$-spaces of sigma-finite
measures, and the factorizations as in the form of \defref{FS}.
\begin{acknowledgement*}
The co-authors thank the following colleagues for helpful and enlightening
discussions: Professors Daniel Alpay, Sergii Bezuglyi, Ilwoo Cho,
Myung-Sin Song, Wayne Polyzou, and members in the Math Physics seminar
at The University of Iowa. 
\end{acknowledgement*}
\bibliographystyle{amsalpha}
\bibliography{ref}

\end{document}